\documentclass[a4paper,12pt]{article}

\newenvironment{proof}{\noindent {\bf Proof:}}{\hfill $\Box$}

\newtheorem{theorem}{Theorem}
\newtheorem{lemma}{Lemma}

\newtheorem{corollary}{Corollary}

\newtheorem{definition}{Definition}
\newtheorem{assumption}{Assumption}
\newtheorem{remark}{Remark}

\usepackage{booktabs}

\usepackage{algorithm}
\usepackage[noend]{algpseudocode}
\makeatletter
\def\BState{\State\hskip-\ALG@thistlm}
\makeatother
\usepackage{algorithmicx}

\textheight235mm
\textwidth165mm
\voffset-10mm
\hoffset-12.5mm
\parindent0cm
\parskip2mm

\usepackage{amsmath}
\usepackage{amssymb}
\usepackage{amsfonts}
\usepackage{graphicx}
\usepackage{ dsfont }

\interfootnotelinepenalty=10000

\usepackage[normalem]{ulem}
\usepackage{color}



\usepackage{tikz}
\usepackage{lipsum}

\newcommand{\mr}[1]{\mathrm{#1}}

\newcommand{\Mc}{\mathcal{M}}

\newcommand{\Pc}{\mathcal{P}}
\newcommand{\Rb}{\mathbb{R}}
\newcommand{\Cb}{\mathbb{C}}
\newcommand{\Nb}{\mathbb{N}}

\newcommand{\bs}{\boldsymbol}


\newcommand{\Fc}{\mathcal{F}}

\newcommand{\Ac}{\mathcal{A}}
\newcommand{\Kc}{\mathcal{K}}
\newcommand{\tow}{\xrightarrow{w}}
\DeclareMathOperator*{\Argmin}{Arg\,min}

\usepackage{url}


\newcommand*{\herm}{^{\mathsf{H}}}

\renewcommand{\Re}{\operatorname{Re}}
\renewcommand{\Im}{\operatorname{Im}}

\newcommand{\argmin}{\operatornamewithlimits{arg\,min}}

\newcommand{\new}[1]{{\color{black}#1}}
\def\be{\begin{equation}}
\def\ee{\end{equation}}



\title{\bf On Convergence of Extended Dynamic Mode Decomposition to the Koopman Operator}

\begin{document}

\author{Milan Korda$^1$, Igor Mezi{\'c}$^1$}

\footnotetext[1]{Milan Korda and Igor Mezi{\'c} are with the University of California, Santa Barbara,\; {\tt milan.korda@engineering.ucsb.edu, mezic@engineering.ucsb.edu}}

\maketitle
\begin{abstract}
\looseness-1 Extended Dynamic Mode Decomposition (EDMD) \cite{Williamsetal:2015} is an algorithm that approximates the action of the Koopman operator on an $N$-dimensional subspace of the space of observables by sampling at $M$ points in the state space. Assuming that the samples are drawn either independently or ergodically from some measure $\mu$, it was shown in~\cite{klus2015numerical} that, in the limit as $M\rightarrow\infty$, the EDMD operator $\mathcal{K}_{N,M}$ converges to $\mathcal{K}_N$, where $\mathcal{K}_N$ is the $L_2(\mu)$-orthogonal projection of the action of the Koopman operator on the finite-dimensional subspace of observables. We show that, as $N \rightarrow \infty$, the operator $\mathcal{K}_N$ converges in the strong operator topology to the Koopman operator. This in particular implies convergence of the predictions of future values of a given observable over any finite time horizon, a fact important for practical applications such as forecasting, estimation and control. In addition, we show that accumulation points of the spectra of $\mathcal{K}_N$ correspond to the eigenvalues of the Koopman operator with the associated eigenfunctions converging weakly to an eigenfunction of the Koopman operator, provided that the weak limit of the eigenfunctions is nonzero. As a by-product, we propose an analytic version of the EDMD algorithm which, under some assumptions, allows one to construct  $\mathcal{K}_N$ directly, without the use of sampling. Finally, under additional assumptions, we analyze convergence of $\mathcal{K}_{N,N}$ (i.e., $M=N$), proving convergence, along a subsequence, to weak eigenfunctions (or eigendistributions) related to the eigenmeasures of the Perron-Frobenius operator. No assumptions on the observables belonging to a finite-dimensional invariant subspace of the Koopman operator are required throughout.
 \end{abstract}

\begin{center}\small
{\bf Keywords:} Koopman operator, dynamic mode decomposition, convergence, spectrum
\end{center} 
 
\section{Introduction}
Recently, there has been an expanding interest in utilizing  the {\it spectral expansion} based methodology that enabled progress in data-driven analysis of high-dimensional nonlinear systems. The research was  initiated  in \cite{MezicandBanaszuk:2004,Mezic:2005}, using composition (Koopman) operator representation  originally defined in~\cite{Koopman:1931}. The framework is being used for  model reduction, identification, prediction, data assimilation and control of deterministic (e.g., \cite{Budisicetal:2012,mauroy2017koopman,rowley2009,Williamsetal:2015,Bruntonetal:2016,Giannakisetal:2015,korda2016linear}) as well as stochastic dynamical systems~(e.g.,~\cite{takeishi2017subspace,riseth2017operator,wu2017variational}). This has propelled the theory to wide use on a diverse set of applications  such as fluid dynamics \cite{SharmaMezicM16,Glazetal:2016}, power grid dynamics \cite{Susukietal:2016}, neurodynamics \cite{brunton2016extracting}, energy efficiency \cite{GeorgescuandMezic:2015},  molecular dynamics \cite{wuetal:2016} and data fusion \cite{Williamsetal:2015}.

Numerical  methods for approximation of the spectral properties of the Koopman operator have been considered since the inception of the  data-driven analysis of dissipative dynamical systems  \cite{MezicandBanaszuk:2004,Mezic:2005}. These belong to the class of Generalized Laplace Analysis (GLA) methods \cite{Mezic:2013}. An alternative line of algorithms, called the Dynamic Mode Decomposition (DMD) algorithm \cite{schmid:2010,rowley2009} have also been advanced, enabling concurrent data-driven determination of approximate eigenvalues and eigenvectors of the underlying DMD operator. 
The examples of  DMD-type algorithms are 1) the companion-matrix method proposed by Rowley et al. \cite{rowley2009}, 2) the SVD-based DMD developed by Schmid \cite{schmid:2010},  3) the Exact DMD method introduced by Tu et al. \cite{tu2014dynamic}, 4) the Extended DMD~\cite{Williamsetal:2015}.
The relationship between these methods and the spectral operator properties of the Koopman operator was first noticed in \cite{rowley2009},  based on the spectral expansion developed in \cite{Mezic:2005}.  However, rigorous results in this direction are sparse. Williams et al. \cite{Williamsetal:2015} provided a result the corollary of which is that the spectrum of the EDMD approximation is contained in the spectrum of the Koopman operator provided the observables belong to a finite-dimensional invariant subspace of the Koopman operator and the data matrix is of the same rank. The work of Arbabi and Mezi\'c \cite{ArbabiandMezic:2016} suggested that an alternative assumption to the finite rank is that the number of sampling points $M$ goes to infinity even though the results of~\cite{ArbabiandMezic:2016} still implicitly rely on a finite-dimensional assumption. 


The work \cite{klus2015numerical} showed that, assuming either independent identically distributed (iid) or ergodic sampling from a measure $\mu$, the EDMD operator on $N$ observables constructed using $M$ samples, $\Kc_{N,M}$, converges as $M\to\infty$ to $\Kc_N$, where $\Kc_N$ is the $L_2(\mu)$-orthogonal projection of the action of the Koopman operator on the finite-dimensional subspace of observables. In this work, we show that $\Kc_N$ converges to $\Kc$ in the strong operator topology. As a result, predictions of a given observable obtained using $\Kc_N$ or $\Kc_{N,M}$ over any finite prediction horizon converge in the $L_2(\mu)$ norm (as $N$ or $N$ and $M$ tend to infinity) to its true values. In addition, we show that, as $N\rightarrow \infty$, accumulation points of the spectra of $\Kc_N$ correspond to eigenvalues of the Koopman operator, and the associated eigenfunctions converge weakly to an eigenfunction of the Koopman operator, provided that the weak limit of the eigenfunctions is nonzero. The results hold in a very general setting with minimal underlying assumptions. In particular, we {\it do not} not assume that the finite-dimensional subspace of observables is invariant under the action of the Koopman operator or that the dynamics is measure preserving.

As a by-product of our results, we propose an analytic version of the EDMD algorithm which allows one to construct  $\Kc_N$ directly, without the need for sampling, under the assumption that the transition mapping of the dynamical system is known analytically and the $N$-dimensional subspace of observables is such that the integrals of the products of the observables precomposed with the transition mapping can be evaluated in closed form. This method is not immediately useful for large-scale data-driven applications that the EDMD was originally designed for but it may prove useful in control applications (e.g.,~\cite{korda2016linear}), where model is often known, or for numerical studies of Koopman operator approximations on classical examples, eliminating the sampling error in both cases.

Finally, we analyze convergence of $\Kc_{N,N}$, i.e., the situation where the number of samples $M$ and the number of observables $N$ are equal. Under the additional assumptions that the sample points lie on the same trajectory and the mapping $T$ is a homeomorphism, we prove convergence along a subsequence to weak eigenfunctions (or eigendistributions in the sense of~\cite{GelfandandShilov:1964}) of the Koopman operator, which also turn out to be eigenmeasures of the Perron-Frobenius operator.

The paper is organized as follows: in Section~\ref{EDMD} we introduce the setting of EDMD. In Section~\ref{EDMDl2} we show that EDMD is an orthogonal projection of the action of the Koopman operator on a finite subspace of observables with respect to the empirical measure supported on sample points drawn from a measure $\mu$. In Section~\ref{EDMDconv} we show that this projection converges to the $L_2(\mu)$-projection of the action of the Koopman operator. In Section \ref{KNconv} we analyze the convergence of the EDMD approximations as the dimension of the subspace $N$ goes to infinity, showing convergence in strong operator topology and convergence of the eigenvalues along a subsequence plus weak convergence of the associated eigenfunctions. In Section~\ref{sec:finiteHor} we show convergence of finite-horizon predictions. Section~\ref{analyEDMD} describes the analytic construction of~$\Kc_N$. Section~\ref{KNNconv} contains results for the case when $M=N$ and only convergence to weak eigenfunctions, along a subsequence, is  proven. We conclude in Section~\ref{conc}.

\new{
\subsection*{Notation}
The spaces of real and complex numbers are denoted by $\Rb$ and $\Cb$ with $\Rb^{n\times k}$ and $\Cb^{n\times k}$ denoting the corresponding real and complex $n\times k$ matrices; the real and complex column vectors are denoted by $\Rb^n := \Rb^{n\times 1}$ and  $\Cb^n := \Cb^{n\times 1}$. The complex conjugate of $a\in \Cb$ is denoted by $\overline{a}$. Given a matrix $A\in \Cb^{n\times k}$, $A^\top$ denotes its transpose and $A\herm$ denotes its Hermitian transpose (i.e., $A_{i,j}^\top = A_{j,i}$ and $A\herm_{i,j} = \overline{A_{j,i}}$). The Moore-Penrose pseudoinverse of a matrix $A$ is denoted by~$A^\dagger$. The Frobenius norm of a matrix $A$ is denoted by $\| A \|_F = \sqrt{ \sum_{i,j}{ |A_{i,j}|^2 }}$. Given a vector $c\in \Cb^n$, the symbol $\|c\|_2 := \sqrt{\sum_{i}|c_i|^2}$ denotes its Euclidean norm.
}

\section{Extended Dynamic Mode Decomposition}
\label{EDMD}
We consider a discrete time dynamical system
\begin{equation}\label{eq:sys}
x^+ = T(x)
\end{equation}
with $T:\Mc \to \Mc$, where $\Mc$ is a topological space\footnote{We choose to work in the general setting of dynamical systems on arbitrary topological spaces which encompasses dynamical systems on finite-dimensional manifolds (in which case one can regard $\Mc$ as a subset of $\Rb^n$), as well as infinite dimensional dynamical systems, arising, for example, from the study of partial differential equations or dynamical systems with control inputs~\cite{korda2016linear}.}, and we assume that we are given snapshots of data
\begin{equation}\label{eq:data}
\bs  X = [x_1,\ldots, x_M],\quad \bs Y = [y_1,\ldots, y_M]
\end{equation}
satisfying $y_i = T(x_i)$. We do not assume that the data points line on a single trajectory of~(\ref{eq:sys}).

Given a vector space of observables $\mathcal{F}$ such that $\psi: \Mc \to \Cb$ and $\psi \circ T \in \Fc$ for every $\psi \in \Fc$, we define the Koopman $\Kc: \Fc\to \Fc$ by
\[
\Kc \psi = \psi \circ T,
\]
where $\circ$ denotes the pointwise function composition. Given a set of linearly independent basis functions $\psi_i \in \Fc$, $i=1,\ldots,N$, and defining
\begin{equation}\label{eq:FNdef}
\Fc_N := \mr{span}\{\psi_1,\ldots,\psi_N\},
\end{equation}
the EDMD constructs a finite-dimensional approximation $\Kc_{N,M}:\Fc_N\to \Fc_N$ of the Koopman operator by solving the least-squares problem
\begin{equation}\label{eq:ls}
\min_{A\in \Cb^{N\times N}} \| A\bs \psi(\bs X) - \bs\psi(\bs  Y)  \|_F^2 = \min_{A\in \Cb^{N\times N}} \sum_{i=1}^M \| A\bs \psi(x_i) - \bs\psi(y_i)  \|_2^2,
\end{equation}
where
\[
\bs\psi(\bs  X) = [\bs \psi(x_1),\ldots, \bs \psi(x_M)],\quad \bs\psi(\bs  Y) = [\bs \psi(y_1),\ldots, \bs \psi(y_M)]
\]
and 
\[
\bs \psi(x) = [\psi_1(x),\ldots,\psi_N(x)]^\top.
\]
Denoting \begin{equation}\label{eq:ANM}
A_{N,M} = \bs\psi(\bs Y)\bs\psi(\bs X)^\dagger,
\end{equation}
a solution\footnote{In general, the solution to~(\ref{eq:ls}) may not be unique; however, the matrix $A_{N,M} = \bs\psi(\bs Y)\bs\psi(\bs X)^\dagger$, where $\cdot^
\dagger$ denotes the Moore-Penrose pseudoinverse, is always uniquely defined and $A_{N,M}$ is always a minimizer in~(\ref{eq:ls}).  } to~(\ref{eq:ls}), the finite-dimensional approximation of the Koopman operator 
\[
\Kc_{N,M} : \Fc_N \to \Fc_N
\]
is then defined by
\begin{equation}\label{eq:KNM_def}
\Kc_{N,M} \phi = c_\phi\herm A_{N,M}\bs\psi 
\end{equation}
for any $\phi = c_\phi\herm \bs\psi$, $c_\phi\in\Cb^N$ (i.e., for any $\phi \in \Fc_N$). The operator $\Kc_{N,M}$ will be referred to as the EDMD operator.

\section{EDMD as $L_2$ projection}
\label{EDMDl2}
To the best of our knowledge, the results of this section and Section~\ref{EDMDconv} were first obtained in~\cite[Section~3.4]{klus2015numerical} and hinted at already in~\cite{Williamsetal:2015}. Here we rephrase these results in a form more suitable for our purposes.

\new{

From here on we assume\footnote{\new{Since $\Kc:\Fc\to\Fc$, the assumption of $\Fc = L_2(\mu)$ implies that the composition relation $\phi\circ T$, $\phi\in L_2(\mu)$, gives rise to a well-defined operator from $L_2(\mu)$ to $L_2(\mu)$. In particular this implies that $\| \phi_1\circ T - \phi_2\circ T\|_{L_2(\mu)} = 0$ whenever $\| \phi_1-\phi_2 \|_{L_2(\mu)} = 0$ and that $\int_{\Mc} |\phi\circ T|^2 \, d\mu < \infty$ for all $\phi \in L_2(\mu)$.}} that $\Fc = L_2(\mu)$, where $\mu$ is a given positive measure on $\Mc$. This assumption in particular implies that the basis functions $\psi_i$ in~(\ref{eq:FNdef}) belong to $L_2(\mu)$ and hence $\Fc_N$ is a closed (but not necessarily invariant) subspace of $L_2(\mu)$. Note that the measure $\mu$ is \emph{not} required to be invariant for~(\ref{eq:sys}) and hence the Koopman operator $\Kc$ is not necessarily unitary. In practical applications, the measure $\mu$ will typically be the uniform measure on $\Mc$ or other measure from which samples can be drawn efficiently.

We recall that given an arbitrary positive measure $\nu$ on $\Mc$, the space $L_2(\nu)$ is the Hilbert space of all measurable functions $\phi:\Mc\to \Cb$ satisfying 
\[
\| \phi\|_{L_2(\nu)} := \sqrt{\int_{\Mc} |\phi(x)|^2\,d\nu(x)} < \infty.
\]
Assuming $\Fc_N $ is a closed subspace of $L_2(\nu)$, the $L_2(\nu)$-projection of a function $\phi\in L_2(\nu)$ onto  $\Fc_N \subset L_2(\nu)$ is defined by
\begin{equation}\label{eq:projDef}
P_N^\nu \phi = \argmin_{f\in \Fc_N} \|f - \phi\|_{L_2(\nu)}= \argmin_{f\in \Fc_N} \int_{\Mc} |f - \phi|^2\,d\nu =\argmin_{c\in \Cb^N} \int_{\Mc} |c\herm \bs\psi - \phi|^2\,d\nu.
\end{equation}

Now, given the data points $x_1,\ldots,x_M$ from~(\ref{eq:data}), we define the empirical measure $\hat{\mu}_M$ by
\begin{equation}\label{eq:emp_meas_def}
\hat\mu_M = \frac{1}{M}\sum_{i=1}^M \delta_{x_i},
\end{equation}
where $\delta_{x_i}$ is the Dirac measure at $x_i$. In particular, the integral of a function $\phi$ with respect to $\hat{\mu}_M$ is given by
\[
\int_\Mc \phi(x)d\hat\mu_M(x) = \frac{1}{M}\sum_{i=1}^M \phi(x_i).
\]

We remark that the EDMD subspace $\Fc_N$ defined in~(\ref{eq:FNdef}) is a closed subspace of both $L_2(\hat{\mu}_M)$ and $L_2(\mu)$ and hence the projections $P_N^{\mu}$ and $P_N^{\hat\mu_M}$ are well defined.
}

Now we are ready to state the  following characterization of $\Kc_{N,M} \phi$:

\begin{theorem}\label{thm:L2proj}
Let $\hat \mu_M$ denote the empirical measure ~(\ref{eq:emp_meas_def}) associated to the sample points $x_1,\ldots, x_M$ and assume that the $N\times N$ matrix
\begin{equation}\label{eq:Mmuhat_def}
M_{\hat\mu_M}  =\frac{1}{M}\sum_{i=1}^M \bs\psi(x_i)\bs\psi(x_i)\herm = \int_{\Mc} \bs\psi \bs\psi\herm\,d\hat\mu_M 
\end{equation}
is invertible. Then for any $\phi \in \Fc_N$
\begin{equation}\label{eq:KoopL2proj}
\Kc_{N,M}\phi = P_{N}^{\hat{\mu}_M} \Kc \phi = \argmin_{f \in \Fc_N} \| f - \Kc \phi  \|_{L_2(\hat\mu_M )} ,
\end{equation}
i.e.,
\begin{equation}\label{eq:KoopL2_altern}
\Kc_{N,M} = P_N^{\hat\mu_M} \Kc_{|\Fc_N},
\end{equation}
where $\Kc_{|\Fc_N}:\Fc_N\to\Fc$ is the restriction of the Koopman operator to $\Fc_N$.
\end{theorem}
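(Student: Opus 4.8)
The plan is to unwind the definition of $\Kc_{N,M}$ from~(\ref{eq:KNM_def}) and show it coincides with the least-squares/projection operator on the right-hand side of~(\ref{eq:KoopL2proj}). First I would take an arbitrary $\phi = c_\phi\herm \bs\psi \in \Fc_N$ and compute $\Kc\phi = \phi\circ T = c_\phi\herm (\bs\psi\circ T)$; since each $\psi_i\circ T$ lies in $\Fc = L_2(\mu)$ by assumption, $\Kc\phi$ is a bona fide element of $L_2(\hat\mu_M)$ as well (integrability against a finitely supported measure is automatic), so the projection $P_N^{\hat\mu_M}\Kc\phi$ is well defined.

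Next I would write the projection as a finite-dimensional least-squares problem. Using the parametrization $f = c\herm\bs\psi$ and expanding the $L_2(\hat\mu_M)$ norm via~(\ref{eq:emp_meas_def}), the problem $\argmin_{f\in\Fc_N}\|f - \Kc\phi\|_{L_2(\hat\mu_M)}$ becomes $\argmin_{c\in\Cb^N} \frac{1}{M}\sum_{i=1}^M |c\herm\bs\psi(x_i) - \phi(y_i)|^2$, because $(\Kc\phi)(x_i) = \phi(T(x_i)) = \phi(y_i)$. The normal equations for this problem read $M_{\hat\mu_M}\, c = \frac{1}{M}\sum_i \bs\psi(x_i)\overline{\phi(y_i)}$. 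Since $M_{\hat\mu_M}$ is assumed invertible, the minimizing coefficient vector is unique and equals $c = M_{\hat\mu_M}^{-1}\big(\frac{1}{M}\sum_i \bs\psi(x_i)\overline{\phi(y_i)}\big)$; substituting $\phi(y_i) = c_\phi\herm\bs\psi(y_i)$ gives $c\herm = c_\phi\herm \big(\frac{1}{M}\sum_i \bs\psi(y_i)\bs\psi(x_i)\herm\big) M_{\hat\mu_M}^{-1}$.

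Finally I would match this against $\Kc_{N,M}\phi = c_\phi\herm A_{N,M}\bs\psi$ with $A_{N,M} = \bs\psi(\bs Y)\bs\psi(\bs X)^\dagger$. The point is to show $A_{N,M} = \big(\frac{1}{M}\sum_i \bs\psi(y_i)\bs\psi(x_i)\herm\big)M_{\hat\mu_M}^{-1} = \bs\psi(\bs Y)\bs\psi(\bs X)\herm\big(\bs\psi(\bs X)\bs\psi(\bs X)\herm\big)^{-1}$, which follows from the standard identity $\bs\psi(\bs X)^\dagger = \bs\psi(\bs X)\herm\big(\bs\psi(\bs X)\bs\psi(\bs X)\herm\big)^{-1}$ valid whenever $\bs\psi(\bs X)$ has full row rank — and full row rank of the $N\times M$ matrix $\bs\psi(\bs X)$ is exactly equivalent to invertibility of $M_{\hat\mu_M} = \frac{1}{M}\bs\psi(\bs X)\bs\psi(\bs X)\herm$. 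So the two coefficient vectors agree, proving~(\ref{eq:KoopL2proj}); equation~(\ref{eq:KoopL2_altern}) is then just the operator-level restatement.

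The only mildly delicate point is the bookkeeping with Hermitian transposes and the pseudoinverse identity — making sure the full-row-rank branch of the Moore–Penrose pseudoinverse is the relevant one and that it is precisely licensed by the invertibility hypothesis on $M_{\hat\mu_M}$. Everything else is routine linear algebra; there is no real analytic obstacle since the empirical measure is finitely supported, so all integrals are finite sums and all function-space subtleties collapse.
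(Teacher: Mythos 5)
Your proposal is correct and follows essentially the same route as the paper: both reduce the projection $P_N^{\hat\mu_M}\Kc\phi$ to a finite-dimensional least-squares problem, solve the normal equations using the invertibility of $M_{\hat\mu_M}$, and match the resulting coefficient vector with $c_\phi\herm A_{N,M}$. The only cosmetic difference is that you recover the normal-equations formula for $A_{N,M}$ from the pseudoinverse identity $\bs\psi(\bs X)^\dagger = \bs\psi(\bs X)\herm\big(\bs\psi(\bs X)\bs\psi(\bs X)\herm\big)^{-1}$, whereas the paper obtains the same expression by solving~(\ref{eq:ls}) row by row; the two are equivalent under the invertibility hypothesis.
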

\begin{proof}
Since the matrix $M_{\hat\mu_M}$ is invertible, the least-squares problem~(\ref{eq:ls}) has a unique solution given~by
\[
a_i = \Big(\sum_{j=1}^M \bs\psi(x_j)\bs\psi(x_j)\herm\Big)^{-1}\sum_{j=1}^M \bs\psi(x_j)\overline{ \psi_i(y_j)},
\]
where $a_i\herm \in \Cb^{1\times N}$ is the $i^{\mr{th}}$ row of $A_{N,M}$. Therefore
\[
A_{N,M}\herm = \Big(\sum_{j=1}^M \bs\psi(x_j)\bs\psi(x_j)\herm\Big)^{-1}\sum_{j=1}^M \bs\psi(x_j)\bs\psi(y_j)\herm.
\]
On the other hand, analyzing the minimization problem on the right-hand side of~(\ref{eq:KoopL2proj}), we get for any $\phi = c_\phi\herm \bs \psi$
\[
 \argmin_{f \in \Fc_N} \| f - \Kc \phi  \|_{L_2(\hat\mu_M )}  = \argmin_{c\in \Cb^N}  \frac{1}{M}\sum_{i=1}^M ( c\herm \bs\psi(x_i) - c_\phi\herm\bs\psi(y_i))^2  
\]
with the unique minimizer (since the minimized functions is strictly convex in $c$)
\[
c = \Big(\sum_{j=1}^M \bs\psi(x_j)\bs\psi(x_j)\herm\Big)^{-1}\sum_{j=1}^M \bs\psi(x_j)\bs\psi(y_j)\herm c_\phi = A_{N,M}\herm c_\phi.
\]
Hence,  $\argmin_{f \in \Fc_N} \| f - \Kc \phi  \|_{L_2(\hat\mu_M )} = c\herm\bs\psi= c_\phi\herm A_{N,M}\bs\psi = \Kc_{N,M}\phi $ as desired, where we used~(\ref{eq:KNM_def}) in the last equality.
\end{proof}

Theorem~\ref{thm:L2proj} says that for any function $\phi\in\Fc_N$, the EDMD operator $\Kc_{N,M}$ computes the $L_2(\hat\mu_M)$-orthogonal projection of $\Kc\phi$ on the subspace spanned by $\psi_1,\ldots,\psi_N$.

\begin{remark}If the assumption that the matrix $M_{\hat\mu_M}$ is invertible does not hold, then the solution to the projection problem on the right-hand side of~(\ref{eq:KoopL2proj}) may not be unique\footnote{To be more specific, if the matrix $M_{\hat\mu_M}$ is not invertible, the solution to (\ref{eq:KoopL2proj}) may not be unique when viewed as a member of $ L_2(\mu)$. When viewed as a member of $L_2(\hat\mu_M)$, the solution is unique (since it is a projection onto a closed subspace of a Hilbert space). This is because in this case two functions from $\Fc_N$ belonging to distinct $L_2(\mu)$ equivalence classes may fall into the same $L_2(\hat\mu_M)$ equivalence class.}. The action of the EDMD operator $\Kc_{N,M}$ (which is defined uniquely by~(\ref{eq:ANM}) and (\ref{eq:KNM_def})) then selects one solution to the projection problem. In concrete terms, we have
\[
\Kc_{N,M}\phi\in \Argmin_{f\in\Fc_N}\| f - \Kc \phi\|_{L_2(\hat\mu_M)},
\]
where $\Argmin_{f\in\Fc_N}\| f - \Kc \phi\|_{L_2(\hat\mu_M)}$ denotes the set of all minimizers of $\|f - \Kc\phi \|_{L_2(\hat\mu_M)}$ among $f\in\Fc_N$.
\end{remark}

\section{Convergence of $\Kc_{N,M}$ as $M\to\infty$}
\label{EDMDconv}
The first step in understanding convergence of EDMD is to understand the convergence of $\Kc_{N,M}$ as the number of samples $M$ tends to infinity. In this section we prove that \[
\Kc_{N,M} \to \Kc_N,
\]
 where
\begin{equation}
\label{eq:KNdef}
\Kc_N = P_N^\mu \Kc_{|\Fc_N},
\end{equation}
provided that the samples $x_1,\ldots,x_M$ are drawn independently from a given probability distribution $\mu$ (e.g., uniform distribution for compact $\Mc$ or Gaussian for if $\Mc = \Rb^n$).

\begin{assumption}[$\mu$ independence]\label{as:indep}
The basis functions $\psi_1,\ldots,\psi_N$ are such that \[\mu\{x\in \Mc \mid c\herm \bs \psi(x)=0 \} = 0\] for all nonzero $c
\in\Cb^N$.
\end{assumption}
This is a natural assumption ensuring that the measure $\mu$ is not supported on a zero level set of a linear combination of the basis functions used. It is satisfied if $\mu$ is any measure with the support equal to $\Mc$ in conjunction with the majority of most commonly used basis functions such as polynomials, radial basis functions with unbounded support (e.g., Gaussian, thin plate splines) etc. This assumption in particular implies that the matrix $M_{\hat\mu_M}$ defined in~(\ref{eq:Mmuhat_def}) is invertible with probability one for $M \ge N$ if $x_j$'s are iid samples from~$\mu$.

\begin{lemma}\label{lem:projConv}
If Assumption~\ref{as:indep} holds, then for any $\phi \in \Fc$ we have with probability one
\begin{equation}
\lim_{M\to\infty}\|P_N^{\hat\mu_M} \phi -  P_N^{\mu} \phi\| = 0,
\end{equation}
where $\|\cdot \|$ is any norm on $\Fc_N$ (which are all equivalent since $\Fc_N$ is finite dimensional).
\end{lemma}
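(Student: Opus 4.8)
The plan is to reduce the statement to the strong law of large numbers (SLLN) applied coordinate-wise, together with continuity of matrix inversion. First I would write the projection in coordinates: for any positive measure $\nu$ for which $\Fc_N$ is a closed subspace of $L_2(\nu)$, the minimizer in~(\ref{eq:projDef}) is $P_N^\nu\phi = c_\nu\herm\bs\psi$, where the coefficient vector $c_\nu\in\Cb^N$ solves the normal equations $M_\nu c_\nu = b_\nu$, with Gram matrix $M_\nu = \int_\Mc \bs\psi\bs\psi\herm\,d\nu$ and right-hand side $b_\nu = \int_\Mc \bs\psi\,\overline{\phi}\,d\nu$ (both well defined by Cauchy--Schwarz, since $\psi_i,\phi\in L_2(\mu)$, and trivially when $\nu = \hat\mu_M$). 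Under Assumption~\ref{as:indep} the matrix $M_\mu$ is positive definite, hence invertible: for nonzero $c$ one has $c\herm M_\mu c = \int_\Mc |c\herm\bs\psi|^2\,d\mu > 0$ because $\{x : c\herm\bs\psi(x)=0\}$ has $\mu$-measure zero. Thus $c_\mu = M_\mu^{-1}b_\mu$ is well defined, and (as noted after Assumption~\ref{as:indep}) $c_{\hat\mu_M} = M_{\hat\mu_M}^{-1} b_{\hat\mu_M}$ is well defined with probability one for $M\ge N$.

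Next, since $x_1,x_2,\ldots$ are iid draws from $\mu$, each entry of $M_{\hat\mu_M}$ is the empirical average $\frac1M\sum_{i=1}^M \psi_k(x_i)\overline{\psi_l(x_i)}$ of iid random variables with finite mean (Cauchy--Schwarz again), so by the SLLN it converges almost surely to $\E[\psi_k\overline{\psi_l}] = (M_\mu)_{k,l}$; likewise each entry of $b_{\hat\mu_M}$ converges almost surely to the corresponding entry of $b_\mu$. Intersecting these finitely many probability-one events yields $M_{\hat\mu_M}\to M_\mu$ and $b_{\hat\mu_M}\to b_\mu$ almost surely. Since $M_\mu$ is invertible and inversion is continuous on the open set of invertible matrices, on this event $M_{\hat\mu_M}$ is eventually invertible and $M_{\hat\mu_M}^{-1}\to M_\mu^{-1}$, whence $c_{\hat\mu_M}\to c_\mu$ almost surely.

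Finally, by Assumption~\ref{as:indep} the functions $\psi_1,\ldots,\psi_N$ are linearly independent in $L_2(\mu)$, so $c\mapsto c\herm\bs\psi$ is a linear bijection from $\Cb^N$ onto $\Fc_N$; since $\Fc_N$ is finite dimensional, all norms on it are equivalent, and in particular $\|c\herm\bs\psi\|\le C\|c\|_2$ for some constant $C$ and the given norm $\|\cdot\|$. Hence $\|P_N^{\hat\mu_M}\phi - P_N^\mu\phi\| = \|(c_{\hat\mu_M} - c_\mu)\herm\bs\psi\| \le C\,\|c_{\hat\mu_M} - c_\mu\|_2 \to 0$ almost surely, which is the claim.

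I do not expect a substantive obstacle here; the points needing a little care are (i) checking the integrability needed to invoke the SLLN, which follows from $\Fc_N\subset L_2(\mu)$ and $\phi\in L_2(\mu)$ via Cauchy--Schwarz, and (ii) combining all the almost-sure statements on a single probability-one event (and using the almost-sure convergence $M_{\hat\mu_M}\to M_\mu$ to guarantee eventual invertibility) before passing to the limit.
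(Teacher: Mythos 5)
Your proposal is correct and follows essentially the same route as the paper's proof: represent both projections via the normal equations $M_\nu c_\nu = b_\nu$, use Assumption~\ref{as:indep} to get positive definiteness of $M_\mu$, apply the strong law of large numbers entrywise to $M_{\hat\mu_M}$ and $b_{\hat\mu_M,\phi}$, and conclude via continuity of matrix inversion. Your added care about integrability (via Cauchy--Schwarz), intersecting the almost-sure events, and passing from coefficient convergence to convergence in an arbitrary norm on $\Fc_N$ only makes explicit what the paper leaves implicit.
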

\begin{proof}
We have
\begin{align*}
P_N^{\mu} \phi &= \argmin_{f\in\Fc_N} \int_{\Mc} |f - \phi|^2\, d\mu= \bs\psi\herm\argmin_{c\in\Cb^N} \int_{\Mc} |c\herm \bs \psi - \phi|^2\, d\mu \\ &=\bs\psi\herm\argmin_{c\in\Cb^N} \big[c\herm M_{\mu} c - 2\Re\{c\herm b_{\mu,\phi}\}   \big],
\end{align*}
where
\[
M_\mu  = \int_{\Mc} \bs\psi \bs\psi\herm\,d\mu \in \Cb^{N\times N},\quad b_{\mu,\phi} = \int_{\Mc}\bs\psi\overline{\phi}\,d\mu \in \Cb^N
\]
and we dropped the constant term in the last equality which does not influence the minimizer. By Assumption~\ref{as:indep}, the matrix $M_{\mu}$ is invertible and hence Hermitian positive definite. Therefore the unique minimizer is $c = M_{\mu}^{-1}b_{\mu,\phi}$. Hence
\[
P_N^{\mu}\phi = b_{\mu,\phi}\herm M_{\mu}^{-1}\bs\psi.
\]
On the other hand, the same computation shows that
\[
P_N^{\hat\mu_M}\phi = b_{\hat\mu_M,\phi}\herm M_{\hat\mu_M}^{-1}\bs\psi
\]
with 
\[
 b_{\hat\mu_M,\phi} = \int_{\Mc}\bs\psi \overline{\phi}\,d\hat\mu_M = \frac{1}{M}\sum_{i=1}^M \bs\psi(x_i)\overline{\phi(x_i)}
\]
and with the matrix $M_{\hat\mu_M}$ defined in~(\ref{eq:Mmuhat_def}) guaranteed to be Hermitian positive definite by Assumption~\ref{as:indep} with probability one for $M\ge N$. The result then follows by the strong law of large numbers which ensures that \[\lim_{M\to\infty} (b_{\hat\mu_M,\phi}\herm M_{\hat\mu_M}^{-1}) = b_{\mu,\phi}\herm M_{\mu}^{-1}\]
with probability one since the matrix function $A\mapsto A^{-1}$ is continuous and the samples $x_i$ are iid.
\end{proof}

\begin{theorem}\label{thm:convFin}
If Assumption~\ref{as:indep} holds, then we have with probability one for all $\phi \in \Fc_N$
\begin{equation}\label{eq:opConvStrongFin}
\lim_{M\to\infty}\|\Kc_{N,M} \phi -  \Kc_N \phi\| = 0,
\end{equation}
where $\| \cdot\|$ is any norm on $\Fc_N$. In particular
\begin{equation}\label{eq:opConvNormFin}
\lim_{M\to\infty}\|\Kc_{N,M} -  \Kc_N \| = 0,
\end{equation}
where $\| \cdot\|$ is any operator norm and
\begin{equation}\label{eq:opConvSpecFin}
\lim_{M\to\infty}\mr{dist}\big(\sigma(\Kc_{N,M}),\sigma(\Kc_N)\big) = 0,
\end{equation}
where $\sigma(\cdot) \subset \mathbb{C}$ denotes the spectrum of an operator and $\mr{dist}(\cdot,\cdot)$ the Hausdorff metric on subsets of $\mathbb{C}$.
\end{theorem}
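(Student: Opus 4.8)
The plan is to reduce everything to Lemma~\ref{lem:projConv} together with the explicit formula for $\Kc_{N,M}$ obtained in Theorem~\ref{thm:L2proj}. Fix $\phi = c_\phi\herm\bs\psi \in \Fc_N$. By Theorem~\ref{thm:L2proj} we have $\Kc_{N,M}\phi = P_N^{\hat\mu_M}\Kc\phi$, and by definition~(\ref{eq:KNdef}) we have $\Kc_N\phi = P_N^{\mu}\Kc\phi$. Now $\Kc\phi = \phi\circ T \in \Fc = L_2(\mu)$ by the standing assumption on $\Fc$, so Lemma~\ref{lem:projConv} applies directly with the fixed function $\Kc\phi$ in place of the generic $\phi$, giving $\|P_N^{\hat\mu_M}\Kc\phi - P_N^{\mu}\Kc\phi\| \to 0$ with probability one, which is exactly~(\ref{eq:opConvStrongFin}). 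One subtlety to note: Lemma~\ref{lem:projConv} produces, for each fixed argument, a probability-one event; to get a single probability-one event on which~(\ref{eq:opConvStrongFin}) holds for \emph{all} $\phi\in\Fc_N$ simultaneously, I would apply the lemma to each of the finitely many basis functions $\psi_1,\ldots,\psi_N$ composed with $T$ (equivalently, observe from the proof of Lemma~\ref{lem:projConv} that the convergence $b_{\hat\mu_M,\Kc\phi}\herm M_{\hat\mu_M}^{-1} \to b_{\mu,\Kc\phi}\herm M_{\mu}^{-1}$ is driven by the convergence of the matrix $M_{\hat\mu_M}\to M_\mu$ and of the vectors $\frac1M\sum_i \bs\psi(x_i)\overline{\psi_k(y_i)}$ for $k=1,\ldots,N$, all of which are finitely many applications of the strong law of large numbers), and intersect the resulting finitely many probability-one events.

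For~(\ref{eq:opConvNormFin}), since $\Fc_N$ is finite-dimensional, an operator $\Fc_N\to\Fc_N$ is determined by its action on the basis $\psi_1,\ldots,\psi_N$, and all norms on the (finite-dimensional) space of such operators are equivalent. On the probability-one event above, $\Kc_{N,M}\psi_k \to \Kc_N\psi_k$ for each $k$; writing $\Kc_{N,M}$ and $\Kc_N$ as matrices in the basis $\bs\psi$, this says the matrix entries converge, hence $\|\Kc_{N,M}-\Kc_N\|\to 0$ in any operator norm. Concretely, from Theorem~\ref{thm:L2proj} the matrix of $\Kc_{N,M}$ is $A_{N,M}$, and the matrix of $\Kc_N$ is $A_N := M_\mu^{-1}\overline{B_\mu}$ where $(B_\mu)_{ij} = \int_\Mc \psi_i\,\overline{\psi_j\circ T}\,d\mu$; the argument of Lemma~\ref{lem:projConv} gives $A_{N,M}\to A_N$ entrywise, equivalently $\|A_{N,M}-A_N\|_F\to 0$.

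Finally,~(\ref{eq:opConvSpecFin}) follows from~(\ref{eq:opConvNormFin}) by continuity of the spectrum for linear operators on a fixed finite-dimensional space: the map sending a matrix in $\Cb^{N\times N}$ to its multiset of eigenvalues (equivalently, to the set $\sigma(\cdot)\subset\Cb$ equipped with the Hausdorff metric) is continuous, since the eigenvalues are the roots of the characteristic polynomial, whose coefficients depend polynomially (hence continuously) on the matrix entries, and the roots of a monic polynomial depend continuously on its coefficients. Applying this to $A_{N,M}\to A_N$ yields $\mr{dist}(\sigma(\Kc_{N,M}),\sigma(\Kc_N))\to 0$ on the same probability-one event. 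I do not anticipate a genuine obstacle here: the only point requiring a little care is the quantifier exchange in the first paragraph (moving the probability-one statement outside the ``for all $\phi$''), which is handled by the finite-dimensionality of $\Fc_N$; everything else is a direct consequence of Lemma~\ref{lem:projConv} and standard finite-dimensional continuity facts.
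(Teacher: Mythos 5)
Your proposal is correct and follows essentially the same route as the paper: reduce \eqref{eq:opConvStrongFin} to Lemma~\ref{lem:projConv} via the identities $\Kc_{N,M}\phi = P_N^{\hat\mu_M}\Kc\phi$ (Theorem~\ref{thm:L2proj}) and $\Kc_N\phi = P_N^{\mu}\Kc\phi$, handle the ``for all $\phi$'' quantifier by intersecting the finitely many probability-one events for the basis functions, and deduce \eqref{eq:opConvNormFin} and \eqref{eq:opConvSpecFin} from finite-dimensionality. You merely spell out details the paper leaves implicit (entrywise matrix convergence and continuity of eigenvalues via the characteristic polynomial), which is fine.
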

\begin{proof}
For any fixed $\phi \in\Fc_N$ we have by Theorem~\ref{thm:L2proj} 
\[
\Kc_{N,M} \phi = P_{N}^{\hat\mu_M} \Kc \phi = P_{N}^{\hat\mu_M} (\phi\circ T). \]
By definition of $\Kc_N \phi$ we have $\Kc_N \phi =P_N^\mu (\phi\circ T) $ and therefore~(\ref{eq:opConvStrongFin}) holds from Lemma~\ref{lem:projConv} with probability one. Since $\Fc_N$ is finite dimensional, (\ref{eq:opConvStrongFin}) holds with probability one for all basis functions of $\Fc_N$ and hence by linearity for all $\phi\in\Fc_N$. Convergence in the operator norm~(\ref{eq:opConvNormFin}) and spectral convergence~(\ref{eq:opConvSpecFin}) follows from~(\ref{eq:opConvStrongFin}) since the operators $\Kc_{N,M}$ and $\Kc_N$ are finite dimensional.
\end{proof}

Theorem~\ref{thm:convFin} tells us that in order to understand the convergence of $\Kc_{N,M}$ to $\Kc$ it is sufficient to understand the convergence of $\Kc_N$ to $\Kc$. This convergence is analyzed in Section~\ref{KNconv}.

\subsection{Ergodic sampling}\label{sec:ErgodSamp}
The assumption that the samples $x_1,\ldots,x_M$ are drawn independently from the distribution~$\mu$ can be replaced by the assumption that $(T,\Mc,\mu)$ is ergodic and the samples $x_1,\ldots,x_M$ are the iterates of the the dynamical system starting from some initial condition $x\in\Mc$, i.e., $x_i = T^i(x)$. Provided that Assumption~\ref{as:indep} holds, both Lemma~\ref{lem:projConv} and Theorem~\ref{thm:convFin} hold without change; the statement ``with probability one'' is now interpreted with respect to drawing the initial condition $x$ from the distribution $\mu$. The proofs follow exactly the same argument, only the strong law of large numbers is replaced by the Birkhoff's ergodic theorem in Lemma~\ref{lem:projConv}.

\section{Convergence of $\Kc_N$ to $\Kc$}
\label{KNconv}
In this section we investigate convergence of $\Kc_N$ to $\Kc$ in the limit as $N$ goes to infinity. Since the operator $\Kc_N$ is defined on $\Fc_N$ rather than $\Fc$, we extend the operator to all of $\Fc$ by precomposing with $P_N^\mu$, i.e., we study the convergence of $\Kc_N P_N^\mu = P_N^\mu\Kc P_N^\mu:\Fc\to\Fc $ to $\Kc:\Fc\to\Fc$ as $N\to\infty$. Note that as far as spectrum is concerned, precomposing with $P_N^\mu$ just adds a zero to the spectrum.

\new{
To simplify notation, in what follows we denote the $L_2(\mu)$ norm of a function $f$ by
 \[
\| f\| := \| f\|_{L_2(\mu)} = \sqrt{\int_{\Mc} |f|^2\,d\mu}  
\]
and
the usual $L_2(\mu)$ inner product by
\[
\langle f,g\rangle := \int_{\Mc} f\overline g\,d\mu.  
\]

\subsection{Preliminaries}

Before stating our results, we recall several concepts from functional analysis and operator theory.

\begin{definition}[Bounded operator]
An operator $\Ac:\Fc\to\Fc$ defined on a Hilbert space $\Fc$ is bounded if
\[
\| \Ac \| := \sup\limits_{f \in \Fc,\; \| f \|=1} \| \Ac f \| < \infty.
\]
The quantity $\| \Ac \| $ is referred as the norm of $\Ac$.
\end{definition}

\begin{definition}[Convergence in strong operator topology]\label{def:strongConv}
A sequence of bounded operators $\Ac_i: \Fc \to \Fc$  defined on a Hilbert space $\Fc$ convergences strongly (or in the strong operator topology) to an operator $\Ac : \Fc\to \Fc$ if 
\begin{equation}\label{eq:strongConvDef}
 \lim_{i\to\infty}  \| \Ac_i g - \Ac g\| 
\end{equation}
for all $g \in \Fc$.
\end{definition}

\begin{definition}[Weak convergence]\label{def:weakConv}
A sequence of elements $f_i \in \Fc$ of a Hilbert space $\Fc$ converges weakly to $f \in \Fc$, denoted $f_i \tow f$, if 
\begin{equation}\label{eq:weakConvDef}
 \lim_{i\to\infty}  \langle f_i, g\rangle  = \langle f,g\rangle
\end{equation}
for all $g \in \Fc$.
\end{definition}
We emphasize that Definition~\ref{def:strongConv} pertains to convergence of operators defined on $\Fc$ whereas Definition~\ref{def:weakConv} pertains to convergence of elements of $\Fc$. We also remark that strong convergence of $f_i \to f$ (i.e., $\|f_i - f\|\to 0$) implies weak convergence but not vice versa. Similarly, convergence in the strong operator topology implies convergence in the weak operator topology (i.e, $\Ac_i g \tow \Ac_g$ for all $g$) but does not imply convergence in the operator norm (i.e., $\|\Ac_i - \Ac \| \to 0$).

In our setting of $\Fc = L_2(\mu)$, the statements~(\ref{eq:strongConvDef}) and~(\ref{eq:weakConvDef}) translate to the requirements that, respectively, 
\[
\lim_{i\to\infty}  \sqrt{\int_{\Mc} | \Ac_i g - \Ac g |^2\,d\mu} = 0\qquad \mr{and}\qquad  \lim_{i\to\infty}  \int_{\Mc} f_i\overline g \,d\mu = \int_{\Mc} f\overline g \,d\mu
\]
for all $g\in L_2(\mu)$.
}

For the remainder of this work, we invoke the following assumption:
\begin{assumption}\label{as:bounded}
The following conditions hold:
\begin{enumerate}
\item The Koopman operator $\Kc:\Fc \to \Fc$ is bounded.
\item The observables $\psi_1,\ldots,\psi_N$ defining $\Fc_N$ are selected from a given orthonormal basis of $\Fc$, i.e., $(\psi_i)_{i=1}^\infty$ is an orthonormal basis of $\Fc$.
\end{enumerate}
\end{assumption}
The first part of the assumption holds for instance when $T$ is invertible, Lipschitz with Lipschitz inverse and $\mu$ is the Lebesgue measure on $\Mc$ (or any measure absolutely continuous w.r.t. the Lebesgue measure with bounded and strictly positive density). The second part of the assumption is non-restrictive as any countable dense subset of $\Fc$ can be turned into an orthonormal basis using the Gram-Schmidt  process.

\subsection{Convergence in strong operator topology}
In this section we prove convergence in the strong operator topology (Definition~\ref{def:strongConv}) of $\Kc_N P_N^\mu$ to $\Kc$. First, we need the following immediate lemma:

\begin{lemma}\label{lem:projStrong}
If $(\psi_i)_{i=1}^\infty$ form an orthonormal basis of $\Fc = L_2(\mu)$, then $P_N^\mu$ converge strongly to the identity operator $I$ and in addition $\| I - P_N\| \le 1 $ for all $N$.
\end{lemma}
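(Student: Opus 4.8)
The plan is to prove the two claims separately, both being elementary consequences of standard Hilbert space facts about orthogonal projections onto an increasing family of subspaces whose union is dense.

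First I would establish the norm bound $\|I - P_N^\mu\| \le 1$. Since $P_N^\mu$ is the orthogonal projection onto the closed subspace $\Fc_N = \mr{span}\{\psi_1,\ldots,\psi_N\}$, the complementary operator $I - P_N^\mu$ is the orthogonal projection onto $\Fc_N^\perp$. An orthogonal projection onto any closed subspace has operator norm at most $1$ (and exactly $1$ unless the subspace is trivial): for any $f\in\Fc$, writing $f = P_N^\mu f + (I - P_N^\mu)f$ as an orthogonal decomposition, the Pythagorean identity gives $\|f\|^2 = \|P_N^\mu f\|^2 + \|(I-P_N^\mu)f\|^2 \ge \|(I-P_N^\mu)f\|^2$, hence $\|(I-P_N^\mu)f\| \le \|f\|$. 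This immediately yields $\|I - P_N^\mu\| \le 1$.

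For the strong convergence $P_N^\mu \to I$, I would argue as follows. Fix $g\in\Fc = L_2(\mu)$ and expand it in the orthonormal basis: $g = \sum_{i=1}^\infty \langle g,\psi_i\rangle \psi_i$, with $\|g\|^2 = \sum_{i=1}^\infty |\langle g,\psi_i\rangle|^2 < \infty$ by Parseval. Because $\psi_1,\ldots,\psi_N$ are the first $N$ elements of the basis, $P_N^\mu g = \sum_{i=1}^N \langle g,\psi_i\rangle \psi_i$ is exactly the $N$-th partial sum of this expansion, so $\|g - P_N^\mu g\|^2 = \sum_{i=N+1}^\infty |\langle g,\psi_i\rangle|^2$, which is the tail of a convergent series and therefore tends to $0$ as $N\to\infty$. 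This is precisely strong convergence of $P_N^\mu$ to $I$ in the sense of Definition~\ref{def:strongConv}.

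There is essentially no obstacle here; the only point requiring mild care is that the claim relies on part 2 of Assumption~\ref{as:bounded}, namely that the $\psi_i$ are drawn (in order) from a fixed orthonormal basis of $\Fc$, so that the $\Fc_N$ are nested and $\overline{\bigcup_N \Fc_N} = \Fc$. Without the nestedness one would still get strong convergence from density of $\bigcup_N \Fc_N$ together with the uniform bound $\|P_N^\mu\|\le 1$ (an $\varepsilon/2$ argument), but the orthonormal-basis hypothesis makes the partial-sum computation immediate and is what the paper assumes, so I would simply invoke it directly.
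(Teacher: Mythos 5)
Your proposal is correct and follows essentially the same route as the paper: expand $g$ in the orthonormal basis, identify $P_N^\mu g$ with the $N$-th partial sum, and observe that the squared error is the tail $\sum_{i=N+1}^\infty |\langle g,\psi_i\rangle|^2$ of a convergent series, which both tends to zero and is bounded by $\|g\|^2$ (giving $\|I-P_N^\mu\|\le 1$). Your explicit Pythagorean justification of the norm bound and the remark about the density-plus-uniform-boundedness alternative are fine additions but do not change the argument.
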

\begin{proof}
Let $\phi = \sum_{i=1}^\infty c_i\psi_i$ with $\| \phi\| = 1$. Then by Parseval's identity $\sum_{i=1}^\infty |c_i|^2 = 1$ and
 \[\|P_N^\mu \phi - \phi \| = \Bigg\| \sum_{i=N+1}^\infty c_i\psi_i \Bigg\| =  \sum_{i=N+1}^\infty |c_i|^2 \to 0\]
with $\sum_{i=N+1}^\infty |c_i|^2 \le 1$ for all $N$.
\end{proof}

Now we are ready to prove strong convergence of $P_N^\mu\Kc P_N^\mu$ to $\Kc$. 
\begin{theorem}\label{thm:strongConv}
If Assumption~\ref{as:bounded} holds, then the sequence of operators $\Kc_NP_N^\mu=P_N^\mu \Kc P_N^\mu $ converges strongly to $\Kc$ as $N\to \infty$, i.e.,
\[
\lim_{N\to\infty} \int_{\Mc} | \Kc_N P_N^\mu \phi - \Kc\phi |^2\,d\mu  = 0
\]
for all $\phi \in \Fc$.
\end{theorem}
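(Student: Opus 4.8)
I would prove this by the standard ``insert an intermediate operator and apply the triangle inequality'' argument. First note that by the definition~(\ref{eq:KNdef}) of $\Kc_N$ and the fact that $P_N^\mu\phi \in \Fc_N$, we have $\Kc_N P_N^\mu \phi = P_N^\mu \Kc (P_N^\mu \phi)$, so the quantity to be estimated is $\| P_N^\mu \Kc P_N^\mu \phi - \Kc\phi \|$ for a fixed but arbitrary $\phi \in \Fc$. The plan is to write
\[
\| P_N^\mu \Kc P_N^\mu \phi - \Kc\phi \| \le \| P_N^\mu \Kc (P_N^\mu \phi - \phi)\| + \| P_N^\mu \Kc\phi - \Kc\phi \|
\]
and show that each term on the right tends to zero as $N\to\infty$.

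For the second term, since $\Kc\phi$ is a fixed element of $\Fc = L_2(\mu)$ and $(\psi_i)_{i=1}^\infty$ is an orthonormal basis of $\Fc$ by Assumption~\ref{as:bounded}, Lemma~\ref{lem:projStrong} (strong convergence $P_N^\mu \to I$) gives $\| P_N^\mu \Kc\phi - \Kc\phi \| \to 0$ directly. For the first term, I would use that orthogonal projections are non-expansive, i.e. $\|P_N^\mu\| \le 1$ (this is also implicit in the bound $\|I - P_N^\mu\|\le 1$ of Lemma~\ref{lem:projStrong}), together with boundedness of $\Kc$ from Assumption~\ref{as:bounded}, to estimate
\[
\| P_N^\mu \Kc (P_N^\mu \phi - \phi)\| \le \|\Kc\|\, \| P_N^\mu \phi - \phi\|,
\]
and then invoke Lemma~\ref{lem:projStrong} once more, now applied to $\phi$ itself, to conclude $\| P_N^\mu \phi - \phi\| \to 0$ and hence the whole first term vanishes in the limit. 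Adding the two estimates yields $\| \Kc_N P_N^\mu \phi - \Kc\phi \| \to 0$, which is exactly the claimed strong convergence since $\phi \in \Fc$ was arbitrary.

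There is no real obstacle here: the argument is an ``$\varepsilon/2$''-style splitting and the two limits needed are precisely Lemma~\ref{lem:projStrong}. The only point worth emphasizing is that the boundedness of $\Kc$ (the first part of Assumption~\ref{as:bounded}) is genuinely used — it is what allows the error $P_N^\mu\phi - \phi$ introduced by the inner projection to be pushed through $\Kc$ and controlled; without it, $P_N^\mu \Kc(P_N^\mu\phi - \phi)$ need not converge to zero even though $P_N^\mu\phi - \phi \to 0$ strongly. Note also that no assumption on $\Fc_N$ being $\Kc$-invariant, nor on $\mu$ being invariant for $T$, enters the proof.
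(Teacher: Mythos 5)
Your proof is correct and follows essentially the same route as the paper: a triangle-inequality splitting of $P_N^\mu\Kc P_N^\mu\phi-\Kc\phi$ controlled by Lemma~\ref{lem:projStrong} together with the boundedness of $\Kc$. The only (cosmetic) difference is the choice of intermediate term --- you insert $P_N^\mu\Kc\phi$, which makes both pieces converge immediately, whereas the paper inserts $\Kc P_N^\mu\phi$ and then needs one further splitting of the resulting term $\|(P_N^\mu-I)\Kc P_N^\mu\phi\|$.
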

\begin{proof}
Let $\phi \in \Fc$ be given. Then, writing $\phi=P_N^\mu \phi + (I -P_N^\mu)\phi$ we have
\begin{align*}
\| P_N^\mu \Kc P_N^\mu \phi - \Kc \phi \| &= \| (P_N^\mu -I)\Kc P_N^\mu \phi + \Kc(P_N^\mu-I)\phi    \| \le \| (P_N^\mu -I)\Kc P_N^\mu \phi  \|  + \|\Kc \|  \|( I - P_N^\mu)\phi    \|  \\
& \le \| (P_N^\mu -I)\Kc \phi  \| + \| (P_N^\mu -I)\| \| \Kc P_N^\mu \phi -\Kc  \phi  \| +  \|\Kc \|  \|( I - P_N^\mu)\phi    \| \to 0
\end{align*}
by Lemma~\ref{lem:projStrong} and by the fact that $\Kc P_N^\mu\phi \to \Kc\phi$ since $\Kc$ is continuous by Assumption~\ref{as:bounded}.
\end{proof}

\subsection{Weak spectral convergence}

Unfortunately, strong converge does not in general guarantee convergence of the spectra of the operators. This is guaranteed only if the operators converge in the operator norm\footnote{A sequence of operators $\Ac_i$ converges in the operator norm to an operator $\Ac$ if $\lim_{i\to\infty}\|\Ac_i -\Ac\| = 0 $.}. \new{An important exception to this is the case of $\Fc_N$ being an invariant subspace, i.e., $\Kc f \in \Fc_N$ for all $f\in\Fc_N$ in which case the spectra of $\Kc_N$ and $\Kc_{|\Fc_N}$ coincide. Here, however, we \emph{do not assume} that $\Fc_N$ is invariant and  prove certain spectral convergence results in a weak sense}. In particular, we prove convergence of the eigenvalues of $\Kc_N$ along a subsequence and weak convergence of the associated eigenfunctions (see Definition~\ref{def:weakConv}), provided that the weak limit of the eigenfunctions is nonzero. 

\begin{theorem}\label{thm:weakConv}
If Assumption~\ref{as:bounded} holds and $\lambda_N$ is a sequence of eigenvalues of $\Kc_N$ with the associated normalized eigenfunctions $\phi_N \in \Fc_N$, $\|\phi_N  \|=1$, then there exists a subsequence $(\lambda_{N_i},\phi_{N_i})$ such that 
\[
\lim_{i\to\infty}\lambda_{N_i} = \lambda,\quad \phi_{N_i} \tow \phi,
\]
where $\lambda \in \mathbb{C}$  and $\phi\in \Fc$ are such that $\Kc\phi = \lambda \phi$. In particular if $\| \phi\| \ne 0$, then $\lambda$ is an eigenvalue of $\Kc$ with eigenfunction $\phi$.
\end{theorem}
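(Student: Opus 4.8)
The plan is to extract a weakly convergent subsequence from $(\phi_N)$ and pass to the limit in the eigenrelation $\Kc_N\phi_N=\lambda_N\phi_N$, using the boundedness of $\Kc$ and the strong convergence $P_N^\mu\to I$ from Lemma~\ref{lem:projStrong}. First I would deal with the eigenvalues: each $\lambda_N$ satisfies $|\lambda_N|=\|\Kc_N\phi_N\|=\|P_N^\mu\Kc P_N^\mu\phi_N\|\le\|\Kc\|\,\|\phi_N\|=\|\Kc\|$, so $(\lambda_N)$ is a bounded sequence in $\Cb$ and hence has a convergent subsequence $\lambda_{N_i}\to\lambda$. Simultaneously, $(\phi_N)$ lives in the unit ball of the Hilbert space $\Fc=L_2(\mu)$, which is weakly sequentially compact (Banach--Alaoglu, or directly the fact that bounded sequences in a Hilbert space have weakly convergent subsequences), so after refining the subsequence once more we get $\phi_{N_i}\tow\phi$ for some $\phi\in\Fc$ with $\|\phi\|\le 1$.

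The core step is to show $\Kc\phi=\lambda\phi$. For a fixed test function $g\in\Fc$ I would estimate $\langle \Kc\phi - \lambda\phi,\,g\rangle$ by inserting $\phi_{N_i}$: write
\[
\langle \Kc\phi-\lambda\phi,g\rangle = \langle \Kc\phi - \Kc\phi_{N_i},g\rangle + \langle \Kc\phi_{N_i} - \Kc_{N_i}\phi_{N_i},g\rangle + \langle \Kc_{N_i}\phi_{N_i} - \lambda\phi_{N_i},g\rangle + \langle \lambda\phi_{N_i}-\lambda\phi,g\rangle.
\]
The first term tends to $0$ because $\phi_{N_i}\tow\phi$ and $\Kc$ is bounded, hence weak-to-weak continuous, so $\Kc\phi_{N_i}\tow\Kc\phi$. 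The last term tends to $0$ directly from $\phi_{N_i}\tow\phi$. The third term equals $(\lambda_{N_i}-\lambda)\langle\phi_{N_i},g\rangle$ since $\Kc_{N_i}\phi_{N_i}=\lambda_{N_i}\phi_{N_i}$, and this goes to $0$ because $\lambda_{N_i}\to\lambda$ and $\langle\phi_{N_i},g\rangle$ is bounded (it converges to $\langle\phi,g\rangle$). The middle term is the one requiring care: $\Kc\phi_{N_i}-\Kc_{N_i}\phi_{N_i}=\Kc\phi_{N_i}-P_{N_i}^\mu\Kc P_{N_i}^\mu\phi_{N_i}$. Since $\phi_{N_i}\in\Fc_{N_i}$ we have $P_{N_i}^\mu\phi_{N_i}=\phi_{N_i}$, so this simplifies to $(I-P_{N_i}^\mu)\Kc\phi_{N_i}$, and I would bound $|\langle (I-P_{N_i}^\mu)\Kc\phi_{N_i},g\rangle| = |\langle \Kc\phi_{N_i},(I-P_{N_i}^\mu)g\rangle|\le\|\Kc\|\,\|\phi_{N_i}\|\,\|(I-P_{N_i}^\mu)g\|\le\|\Kc\|\,\|(I-P_{N_i}^\mu)g\|\to 0$ by Lemma~\ref{lem:projStrong} (using self-adjointness of the orthogonal projection). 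Combining, $\langle\Kc\phi-\lambda\phi,g\rangle=0$ for all $g\in\Fc$, hence $\Kc\phi=\lambda\phi$.

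The main obstacle is exactly that middle term: one must recognize that the apparent mismatch between $\Kc$ and $\Kc_{N_i}$ is controlled because $\phi_{N_i}$ is already in the subspace $\Fc_{N_i}$ (killing the inner $P_{N_i}^\mu$) and because the outer projection $I-P_{N_i}^\mu$ can be moved onto the fixed test function $g$, where strong convergence of the projections applies — whereas it would \emph{not} apply uniformly to the moving sequence $\Kc\phi_{N_i}$. The final sentence of the theorem then follows immediately: if $\|\phi\|\ne 0$ then $\phi$ is a genuine (nonzero) eigenfunction of $\Kc$ with eigenvalue $\lambda$ by definition.
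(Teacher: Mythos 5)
Your proof is correct, and the key maneuver is the same as in the paper's proof: extract a weakly convergent subsequence, test the eigenrelation against a fixed $g\in\Fc$, and control the problematic term by transferring $I-P_{N_i}^\mu$ onto $g$ via self-adjointness of the orthogonal projection, where Lemma~\ref{lem:projStrong} applies. The bookkeeping differs slightly: the paper splits $\Kc\phi = \hat\Kc_i(\phi-\phi_i) + (\Kc-\hat\Kc_i)\phi + \hat\Kc_i\phi_i$ and invokes Theorem~\ref{thm:strongConv} for the middle term, then handles $\hat\Kc_i(\phi-\phi_i)$ by first showing $\Kc(P_{N_i}^\mu\phi - \phi_i)\tow 0$. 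Your four-term decomposition is a bit more economical — it replaces the appeal to Theorem~\ref{thm:strongConv} by the observation that a bounded operator is weak-to-weak continuous ($\langle\Kc(\phi-\phi_{N_i}),g\rangle = \langle\phi-\phi_{N_i},\Kc^\star g\rangle\to 0$), and it isolates the genuinely delicate piece as the single term $\langle(I-P_{N_i}^\mu)\Kc\phi_{N_i},g\rangle$, which you bound correctly by $\|\Kc\|\,\|(I-P_{N_i}^\mu)g\|$. Both arguments rest on exactly the same two ingredients (boundedness of $\Kc$ and strong convergence of $P_N^\mu$ to $I$), so the difference is organizational rather than conceptual; your version has the minor advantage of being self-contained modulo Lemma~\ref{lem:projStrong}.
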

\begin{proof}
First, observe that since $\Kc_N \phi_N = \lambda_N \phi_N$ with $\phi_N \in \Fc_N$, we also also have  $P_N^\mu \Kc P_N^\mu \phi_N =  \lambda_N \phi_N $. Hence $|\lambda_N|  \le  \| P_N^\mu \Kc P_N^\mu   \| \le \| \Kc\| < \infty$ by Assumption~\ref{as:bounded} and the fact that $\| P_N^\mu\| \le 1 $. Therefore the sequence $\lambda_N$ is bounded. Since $\phi_N$ is normalized and hence bounded, by weak sequential compactness of the unit ball of a Hilbert space (which follows from the Banach-Alaoglu theorem~\cite[Theorems 3.15]{rudinFA} and Eberlein-\v Smulian theorem~\cite{dunford_schwartz}), there exists a subsequence $(\lambda_{N_i},\phi_{N_i})$ such that $\lambda_{N_i}\to\lambda $ and $\phi_{N_i}\tow \phi $.

It remains to prove that $(\lambda,\phi)$ is an eigenvalue-eigenfunction pair of $\Kc$. For ease of notation, set $\lambda_{i} = \lambda_{N_i}$ and $\phi_i  = \phi_{N_i}$. Denote $\hat\Kc_i = \Kc_{N_i} P_{N_i}^\mu = P_{N_i}^\mu \Kc P_{N_i}^\mu$ and observe that $\hat\Kc_i \phi_i = \lambda_i\phi_i$ for all $i$. Then we have
\begin{align*}
  \Kc\phi =  \hat\Kc_i(\phi-\phi_i) + (\Kc-\hat\Kc_i)\phi + \hat\Kc_i\phi_i.
\end{align*}
Taking the inner product with an arbitrary $f\in\Fc$ and using the fact that $\hat\Kc_i\phi_i = \lambda_i\phi_i$, we get
\[
 \langle \Kc\phi,f \rangle =  \langle\hat\Kc_i(\phi-\phi_i),f \rangle+\langle(\Kc-\hat\Kc_i)\phi,f\rangle + \langle\lambda_i\phi_i ,f \rangle.
\]
Now, the second term on the right hand side $\langle(\Kc-\hat\Kc_i)\phi,f\rangle\to 0$ since $\hat\Kc_i$ converges strongly to $\Kc$ by Theorem~\ref{thm:strongConv}. The last term $\langle\lambda_i\phi_i ,f \rangle \to \langle \lambda\phi,f\rangle$ since $\lambda_i\to\lambda$ and $\phi_i \tow \phi$. It remains to show that the first term converges to zero. We have
\[\langle \hat\Kc_i(\phi-\phi_i),f \rangle = \langle P_{N_i}^\mu\Kc P_{N_i}^\mu(\phi-\phi_i),f \rangle = \langle \Kc (P_{N_i}^\mu\phi-\phi_i), P_{N_i}^\mu f \rangle,    \] 
where we used the fact that $P_{N_i}^\mu$ is self-adjoint and $\phi_i \in \Fc_{N_i}$ and hence $P_{N_i}^\mu \phi_i = \phi_i$. Denote $h_i := \Kc (P_{N_i}^\mu\phi-\phi_i)$. We will show that $h_i\tow 0$. Indeed, denoting $\Kc^\star$ the adjoint of $\Kc$, we have
\[
\langle \Kc (P_{N_i}^\mu\phi-\phi_i),f\rangle = \langle (P_{N_i}^\mu\phi-\phi + \phi -\phi_i),\Kc^\star f\rangle = \langle P_{N_i}^\mu\phi-\phi ,\,\Kc^\star f\rangle + \langle \phi -\phi_i,\,\Kc^\star f\rangle \to 0,
\]
since $P_{N_i}^\mu$ converges strongly to the identity (Lemma~\ref{lem:projStrong}) and $\phi_i \tow \phi$. Finally, we show that $\langle h_i, P_{N_i}^\mu f\rangle \to 0$. We have
\[
\langle h_i, P_{N_i}^\mu f\rangle  = \langle h_i, P_{N_i}^\mu f - f\rangle + \langle h_i, f\rangle. 
\]
The second term goes to zero since $h_i\tow 0$. For the first term we have
\[
\langle h_i, P_{N_i}^\mu f - f\rangle \le \| h_i\| \|P_{N_i}^\mu f - f  \| \to 0
\]
since $P_{N_i}^\mu$ converges strongly to the identity operator (Lemma~\ref{lem:projStrong}) and $h_i$ is bounded since $\Kc$ is bounded by Assumption~\ref{as:bounded}, $\| P_{N_i}^\mu\| \le 1$ and $\| \phi_i \| \le 1$.
Therefore we conclude that
\[
 \langle \Kc\phi,f \rangle = \lim_{i\to\infty} \langle \lambda_i \phi_i,f\rangle =  \langle \lambda \phi ,f \rangle 
\]
for all $f \in \Fc$. Therefore $\Kc \phi = \lambda \phi$.
\end{proof}

\textbf{Example}\;\; {\it As an example demonstrating that the assumption that the weak limit of $\phi_N$ is nonzero is important, consider $\Mc = [0,1]$, $T(x) = x$ and $\mu$ the Lebesgue measure on $[0,1]$. In this setting, the Koopman operator $\Kc: L_2(\mu)\to L_2(\mu)$ is the identity operator with the spectrum being the singleton $\sigma(\Kc)=\{1\}$. However, given \emph{any} $\lambda \in \mathbb{C}$ and the sequence of functions $\phi_N = \sqrt{2}\sin(2\pi N x)$, we have \[\Kc\phi_N - \lambda\phi_N = \phi_N - \lambda\phi_N  = (1-\lambda)\sqrt{2}\sin(2\pi N x)\tow 0\]
with $\| \phi_N\|^2 = \int_{0}^1 2\sin^2(2\pi N x)\,dx = 1$. Therefore if $\phi_N$ were the eigenfunctions of $\Kc_N$ with eigenvalues $\lambda_N\to \lambda \ne 1$, then the sequence $\lambda_N$ would converge to a spurious eigenvalue $\lambda$. Fortunately, in this case, we have $\sigma(\Kc_N) = \{ 1\}$ and hence no spurious eigenvalues exist; however, in general, we cannot rue out this possibility, at least not as far as the statement of  Theorem~\ref{thm:weakConv} goes. See Figure~\ref{fig:sinus} for illustration.

This example, with the highly oscillatory functions $\phi_N$, may motivate practical considerations in detecting spurious eigenvalues, e.g., using Sobolev type (pseudo) norms $\int_{\Mc} \| \nabla \phi_N\|^2\,d\mu$ or other metrics of oscillatoriness. See, e.g., \cite{giannakis2016data} for the use of Sobolev norms in the context of Koopman data analysis and forecasting.}

\begin{figure*}[th]
\begin{picture}(140,170)
\put(130,0){\includegraphics[width=80mm]{./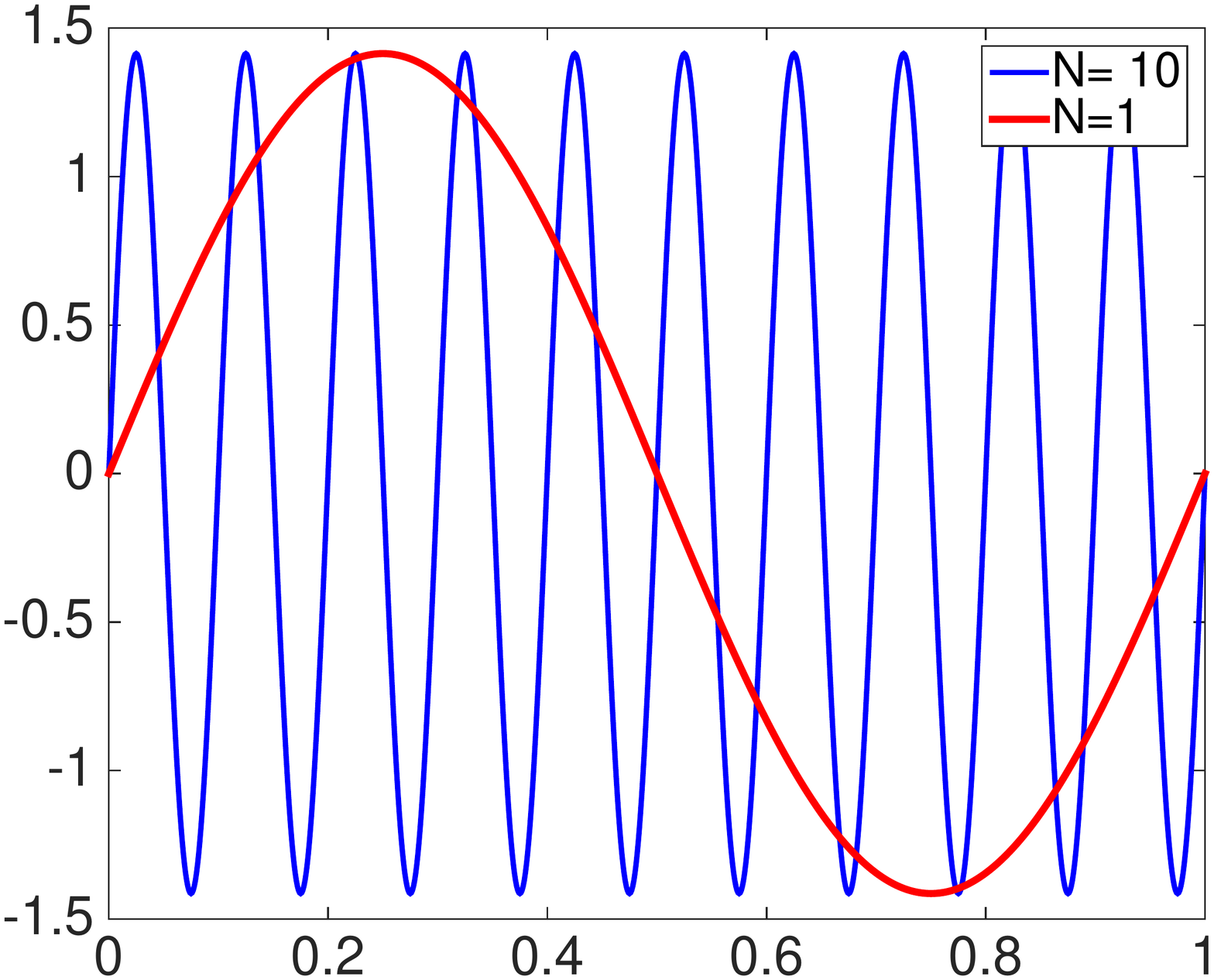}}
\put(245,0){$x$}
\put(105,90){$\phi_N(x)$}
\end{picture}
\caption{Graph of the functions $\phi_N(x) = \sqrt{2}\sin(2\pi Nx)$ for $N = 1$ and $N=10$. These functions satisfy $\| \phi_N\|_{L_2}=1$ and $\phi_N\tow 0$ as $N\to\infty$. If such $\phi_N$ happen to be eigenfunctions of $\Kc_N$ with eigenvalues $\lambda_N$, then the accumulation points of the sequence $(\lambda_N)_{N=1}^\infty$ need not be eigenvalues of the Koopman operator.}
\label{fig:sinus}
\end{figure*}


As an immediate corollary of Theorem~\ref{thm:weakConv}, we get:
\begin{corollary}\label{thm:weakConv_NM}
If Assumption~\ref{as:bounded} holds and $\lambda_{N,M}$ is a sequence of eigenvalues of $\Kc_{N,M}$ with the associated normalized eigenfunctions $\phi_{N,M} \in \Fc_N$, $\|\phi_{N,M}  \|=1$, then there exists a subsequence $(\lambda_{N_i,M_j},\phi_{N_i,M_j})$ such that with probability one 
\[
\lim_{i\to\infty}\lim_{j\to\infty}\lambda_{N_i,M_j} = \lambda,\quad  \lim_{i\to\infty}\lim_{j\to\infty}\langle \phi_{N_i,M_j},f\rangle = \langle \phi,f\rangle,
\]
for all $f \in \Fc$, where $\lambda \in \mathbb{C}$  and $\phi\in \Fc$ are such that $\Kc\phi = \lambda \phi$. In particular if $\| \phi\| \ne 0$, then $\lambda$ is an eigenvalue of $\Kc$ with eigenfunction $\phi$.

\end{corollary}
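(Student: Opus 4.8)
The plan is to obtain the corollary by chaining the two convergence results already proved: Theorem~\ref{thm:convFin} controls the inner limit $M\to\infty$ for fixed $N$, while Theorem~\ref{thm:weakConv} controls the outer limit $N\to\infty$. So I would first convert, for each fixed $N$, the given eigendata of $\Kc_{N,M}$ into eigendata of $\Kc_N$, and then feed the latter into Theorem~\ref{thm:weakConv}.

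First, fix $N$. By the spectral convergence~(\ref{eq:opConvSpecFin}), $\mr{dist}\big(\lambda_{N,M},\sigma(\Kc_N)\big)\to 0$ with probability one as $M\to\infty$; since $\sigma(\Kc_N)$ is a finite set (at most $N$ points), some subsequence of $(\lambda_{N,M})_M$ converges to a point $\tilde\lambda_N\in\sigma(\Kc_N)$. The associated normalized eigenfunctions $\phi_{N,M}$ all lie in the finite-dimensional subspace $\Fc_N$, so along a further subsequence they converge in $L_2(\mu)$ to some $\tilde\phi_N\in\Fc_N$ with $\|\tilde\phi_N\|=1$. Letting $M\to\infty$ along this subsequence in $\Kc_{N,M}\phi_{N,M}=\lambda_{N,M}\phi_{N,M}$ and using the operator-norm convergence $\Kc_{N,M}\to\Kc_N$ from~(\ref{eq:opConvNormFin}) gives $\Kc_N\tilde\phi_N=\tilde\lambda_N\tilde\phi_N$, so $(\tilde\lambda_N,\tilde\phi_N)$ is a genuine eigenvalue--eigenfunction pair of $\Kc_N$ with unit-norm eigenfunction. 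A standard diagonal extraction (extract for $N=1$, refine for $N=2$, and so on, then take the diagonal) then produces a single sequence $M_j\to\infty$ such that, for every $N$ and every $f\in\Fc$, $\lim_{j\to\infty}\lambda_{N,M_j}=\tilde\lambda_N$ and $\lim_{j\to\infty}\langle\phi_{N,M_j},f\rangle=\langle\tilde\phi_N,f\rangle$ (the latter because $L_2(\mu)$-convergence implies weak convergence). Since this invokes only countably many probability-one events from Theorem~\ref{thm:convFin}, the construction is valid with probability one.

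Next I would apply Theorem~\ref{thm:weakConv} to the sequence $(\tilde\lambda_N,\tilde\phi_N)_N$ of eigenpairs of $\Kc_N$ (its proof applies verbatim along any subsequence $N_i\to\infty$), obtaining a subsequence $(N_i)$ along which $\tilde\lambda_{N_i}\to\lambda$ and $\tilde\phi_{N_i}\tow\phi$, with $\lambda\in\Cb$, $\phi\in\Fc$ and $\Kc\phi=\lambda\phi$. Combining with the previous step, for the doubly indexed subsequence $(\lambda_{N_i,M_j},\phi_{N_i,M_j})$ we obtain
\[
\lim_{i\to\infty}\lim_{j\to\infty}\lambda_{N_i,M_j}=\lim_{i\to\infty}\tilde\lambda_{N_i}=\lambda,\qquad \lim_{i\to\infty}\lim_{j\to\infty}\langle\phi_{N_i,M_j},f\rangle=\lim_{i\to\infty}\langle\tilde\phi_{N_i},f\rangle=\langle\phi,f\rangle
\]
for all $f\in\Fc$, which is exactly the claim; the remaining assertion that $\lambda\in\sigma(\Kc)$ whenever $\|\phi\|\ne0$ is then immediate.

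The only point requiring care is the bookkeeping in the middle step: simultaneously forcing, via one subsequence of the $M$'s, convergence of both the eigenvalue and the normalized eigenfunction for a fixed $N$, diagonalizing over $N$, and verifying that each limiting pair $(\tilde\lambda_N,\tilde\phi_N)$ is an honest eigenpair of $\Kc_N$ so that Theorem~\ref{thm:weakConv} is applicable. This rests entirely on the finite-dimensionality of $\Fc_N$ together with the operator-norm and spectral convergence already recorded in Theorem~\ref{thm:convFin}; no new ideas are needed, so the corollary is in essence a concatenation of Theorems~\ref{thm:convFin} and~\ref{thm:weakConv}.
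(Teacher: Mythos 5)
Your proposal is correct and follows essentially the same route as the paper's proof: for fixed $N$, extract a convergent subsequence in $M$ using boundedness of the eigenvalues and norm-compactness of the unit sphere in the finite-dimensional $\Fc_N$, pass to the limit via Theorem~\ref{thm:convFin} to obtain a genuine unit-norm eigenpair of $\Kc_N$, and then invoke Theorem~\ref{thm:weakConv}. Your explicit diagonal extraction over $N$ and the use of~(\ref{eq:opConvSpecFin}) in place of a direct boundedness argument for $\lambda_{N,M}$ are minor bookkeeping refinements of the same argument.
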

\begin{proof}
First notice that since $\Kc_{N,M} \to \Kc_N$ in the operator norm (Theorem~\ref{thm:convFin}) and $\|\Kc_N\| \le \|\Kc \| < \infty$, the sequence $\lambda_{N,M}$ is bounded. Since $\phi_{N,M}$ are normalized and hence bounded,  we can extract a subsequence $(\lambda_{N,M_j},\phi_{N,M_j})$ such that $\lim_{j\to\infty} \lambda_{N,M_j}=\lambda_N \in \mathbb{C}$ and $\lim_{j\to\infty} \phi_{N,M_j} = \phi_N \in \Fc_N$ (strong convergence as $\Fc_N$ is finite dimensional). Then
\[
\Kc_N\phi_N = (\Kc_N - \Kc_{N,M_j})\phi_N + \Kc_{N,M_j}(\phi_N-\phi_{N,M_j}) + \Kc_{N,M_j}\phi_{N,M_j}.
\]
Since $\Kc_{N,M_j}$ converges strongly to $\Kc_N$ with probability one (Theorem~\ref{thm:convFin}) and since $\phi_{N,M_j}$ converges strongly to $\phi_N$, the first two terms go to zero with probability one as $j$ tends to infinity. The last term is equal to $\lambda_{N,M_j} \phi_{N,M_j}$ and hence necessarily $\Kc\phi_N = \lambda_N \phi_N$, $\| \phi_N\| =1$, with probability one. The result then follows from Theorem~\ref{thm:weakConv}.
\end{proof}

\section{Implications for finite-horizon predictions}\label{sec:finiteHor}
One of the main roles of an approximation to the Koopman operator is to provide a \emph{prediction} of the evolution of a given observable. Whereas obtaining accurate predictions over an infinite-time horizon cannot be expected in general from the EDMD approximation of the Koopman operator, a prediction over any \emph{finite horizon} is asymptotically, as $N\to\infty$, exact when the prediction error is measured in the $L_2(\mu)$ norm:
\begin{theorem}\label{thm:predFin}
Let $f\in \Fc^n$ be a given (vector) observable\footnote{We choose to state the theorem for vector observables as this is the form of prediction typically encountered in practice. For a vector observable $f\in \Fc^n$, the norm $\| f\|$ is defined by $\sum_{i=1}^n \|f_i \|_{L_2(\mu)}, $ where $f_i\in\Fc$ is the $i^{\mr{th}}$ component of $f$.} and let Assumption~\ref{as:bounded} hold. Then for any $\Omega \in \mathbb{N}$ we have
\begin{equation}\label{eq:predFin_gen}
\lim_{N\to\infty} \sup_{i\in\{1,\ldots,\Omega\}} \| (\Kc_N)^i P_N^\mu f - \Kc^i f  \| = 0.
\end{equation}
In particular, if $f\in \Fc_{N_0}^n$ for some $N_0 \in \mathbb{N}$, then \begin{equation}\label{eq:predFin_spec}
\lim_{N\to\infty} \sup_{i\in\{1,\ldots,\Omega\}} \| (\Kc_N)^i  f - \Kc^i f  \| = 0.
\end{equation}
\end{theorem}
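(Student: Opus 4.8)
The plan is to reduce everything to the strong convergence already established in Theorem~\ref{thm:strongConv}, together with the uniform bound $\|\Kc_N P_N^\mu\| \le \|\Kc\|$. Write $\hat\Kc_N := \Kc_N P_N^\mu = P_N^\mu \Kc P_N^\mu$. The first observation I would make is that $(\Kc_N)^i P_N^\mu = \hat\Kc_N^{\,i}$: since $\Kc_N$ maps $\Fc_N$ into $\Fc_N$ and $P_N^\mu$ acts as the identity on $\Fc_N$, inserting $P_N^\mu$ between successive applications of $\Kc_N$ changes nothing (a one-line induction). Hence it suffices to prove that $\hat\Kc_N^{\,i} \to \Kc^i$ strongly for each fixed $i$; the supremum in~(\ref{eq:predFin_gen}) then goes to zero because it is a maximum over the \emph{finite} index set $\{1,\ldots,\Omega\}$, and a maximum of finitely many null sequences is null.

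For the strong convergence of the powers I would use the telescoping identity
\[
\hat\Kc_N^{\,i} - \Kc^i = \sum_{k=0}^{i-1} \hat\Kc_N^{\,i-1-k}\,(\hat\Kc_N - \Kc)\,\Kc^k,
\]
valid for any two bounded operators. Applying both sides to a fixed $g \in \Fc$ and using $\|\hat\Kc_N\| \le \|P_N^\mu\|\,\|\Kc\|\,\|P_N^\mu\| \le \|\Kc\|$ (the bound $\|P_N^\mu\|\le 1$ being part of Lemma~\ref{lem:projStrong}) gives
\[
\| \hat\Kc_N^{\,i} g - \Kc^i g \| \le \sum_{k=0}^{i-1} \|\Kc\|^{\,i-1-k}\, \big\| (\hat\Kc_N - \Kc)(\Kc^k g) \big\|.
\]
Each summand tends to zero as $N\to\infty$ because $\hat\Kc_N \to \Kc$ strongly by Theorem~\ref{thm:strongConv}, applied to the fixed vector $\Kc^k g \in \Fc$; the exponents $i-1-k$ are bounded by $\Omega-1$, so the finitely many coefficients $\|\Kc\|^{\,i-1-k}$ are harmless. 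This yields $\hat\Kc_N^{\,i} g \to \Kc^i g$ for every $g \in \Fc$. For a vector observable $f \in \Fc^n$ one applies this componentwise and sums, using the definition $\| f \| = \sum_{j=1}^n \| f_j \|_{L_2(\mu)}$, and then takes the maximum over $i \in \{1,\ldots,\Omega\}$ to obtain~(\ref{eq:predFin_gen}).

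For the second claim~(\ref{eq:predFin_spec}) I would note that since $(\psi_i)_{i=1}^\infty$ is a fixed orthonormal basis, the subspaces are nested: $\Fc_{N_0} \subseteq \Fc_N$ whenever $N \ge N_0$. Consequently $P_N^\mu f = f$ for every $f \in \Fc_{N_0}^n$ and every $N \ge N_0$, so $(\Kc_N)^i P_N^\mu f = (\Kc_N)^i f$ for all such $N$, and~(\ref{eq:predFin_spec}) is immediate from~(\ref{eq:predFin_gen}).

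I do not expect a serious obstacle here; the only point needing care is that strong convergence may only be invoked at fixed vectors, which is precisely why the telescoping identity should be arranged so that $(\hat\Kc_N - \Kc)$ always acts on a vector independent of $N$ (namely $\Kc^k g$), with the $N$-dependent powers $\hat\Kc_N^{\,i-1-k}$ sitting on the outside, where only their uniform boundedness is used.
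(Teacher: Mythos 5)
Your proof is correct and follows essentially the same route as the paper's: both rest on the strong convergence $\Kc_N P_N^\mu \to \Kc$ (Theorem~\ref{thm:strongConv}) together with the uniform bound $\|\Kc_N P_N^\mu\|\le\|\Kc\|$, arranged so that the difference of operators always acts on an $N$-independent vector; your telescoping sum is just the paper's induction unrolled. If anything, your version is slightly cleaner, since working with $\hat\Kc_N=\Kc_N P_N^\mu$ throughout avoids the paper's mild abuse of notation in writing $\Kc_N g$ for a $g$ that need not lie in $\Fc_N$.
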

\begin{proof}
We proceed by induction. Let $f\in \Fc$. For $\Omega=1$, the result is exactly Theorem~\ref{thm:strongConv}. Let the result hold form some $\Omega \in \mathbb{\Nb}$. It is sufficient to prove that $\| (\Kc_N)^{\Omega+1} P_N^\mu f - \Kc^{\Omega+1} f  \| \to 0$ as $N\to \infty$. We have
\begin{align*}
\| (\Kc_N)^{\Omega+1} P_N^\mu f - \Kc^{\Omega+1} f  \| & = \| \Kc_N(\Kc_N)^{\Omega} P_N^\mu f - \Kc\Kc^{\Omega} f  \|  = \| \Kc_N g_N - \Kc g  \| \\
&\le \| \Kc_N g- \Kc g \| + \| \Kc_N(g_N - g) \| \le  \| \Kc_N g- \Kc g \| + \| \Kc\| \| g_N - g \|.
\end{align*}
where $g_N = (\Kc_N)^{\Omega} P_N^\mu f$ and $g = \Kc^{\Omega} f$; in the last inequality we used the fact that $\|\Kc_N\| \le \| K\|$. The term  $\| \Kc_N g- \Kc g \| $ tends to zero by Theorem~\ref{thm:strongConv}, whereas the term $\| g_N - g \| \to 0$ by the induction hypothesis. This proves~(\ref{eq:predFin_gen}) for a scalar observable $f\in \Fc$. The general result with a vector valued observable $f\in \Fc^n$ follows by applying the the same reasoning to each component of $f$. The result~(\ref{eq:predFin_spec}) follows from (\ref{eq:predFin_gen}) since if $f\in \Fc_{N_0}^n$ for some $N_0\in\mathbb{N}$, then $P_N^\mu f = f$ for $N\ge N_0$.
\end{proof}

To be more specific on practical use of $\Kc_N$ for prediction, assume that $f\in \Fc_{N_0}^n$ for some $N_0\in\mathbb{N}$. Then for all $N\ge N_0$ there exists a matrix $C_N\in \Rb^{n
\times N}$ such that $f = C_N\bs\psi_N$, where $\bs\psi_N = [\psi_1,\ldots,\psi_N]^\top$ are the observables used in EDMD. Assume that an initial state $x_0$ is given and the values of the observables $\bs \psi_N(x_0)$ are known and we wish to predict the value of the observable $f$ at a state $x_i = T^i(x_0)$, i.e., $i$ steps ahead in the future. Using $\Kc_N$, this prediction is given by $C_N A_N^i \bs\psi_N(x_0)$, where \[A_N = \lim_{M\to\infty} A_{N,M}\] with $A_{N,M}$ defined\footnote{In Section~\ref{analyEDMD}, we show how the matrix $A_N$ can be constructed analytically.} in~(\ref{eq:ANM}). Theorem~\ref{thm:predFin} then says that 
\begin{equation}\label{eq:finitePred_specpec}
\lim_{N\to\infty} \int_{\Mc} \| C_N A_N^i \bs\psi_N   - f\circ T^i \|^2_2\,d\mu = 0\qquad \forall\, i\in\mathbb{N}.
\end{equation}
A typical application of Theorem~\ref{thm:predFin} is the prediction of the future state $x$ of the dynamical system~(\ref{eq:sys}) with a finite-dimensional state-space $\Mc\subset \Rb^n$. In this case, one simply sets $f(x) = x$. A crucial feature of the predictor obtained in this way is its \emph{linearity} in the ``lifted state'' $z=\bs\psi_N(x)$, allowing linear tools to address a nonlinear problem. This concept was succesfully applied to model predictive control in~\cite{korda2016linear} and to state estimation in~\cite{surana_estim}.

\begin{remark}
If $A_{N,M}$ is used instead of $A_N$ in Theorem~\ref{thm:predFin} and Equation~(\ref{eq:finitePred_specpec}), then the same convergence results hold with a double limit, first taking the number of samples $M$ to infinity and then the number of basis functions $N$. In particular, we get
\begin{equation}\label{eq:finitePred_specpec_NM}
\lim_{N\to\infty} \lim_{M\to\infty} \int_{\Mc} \| C_N A_{N,M}^i \bs\psi_N   - f\circ T^i \|^2_2\,d\mu = 0\qquad \forall\, i\in\mathbb{N}.
\end{equation}
\end{remark}

\section{Analytic EDMD}
\label{analyEDMD}
The results of the previous sections suggests a variation of the EDMD algorithm provided that the mapping $T$ is known in closed form and provided that the basis functions $\psi_i$ are such that the integrals of $\int_{\Mc} \psi_i \overline{\psi_j}\,d\mu$ and $\int_{\Mc} (\psi_i\circ T)\overline{\psi_j}\,d\mu$ can be computed analytically. This is the case in particular if $T$ and $\psi_i$'s are simple functions such as multivariate polynomials or trigonometric functions and $\mu$ is the uniform distribution over a simple domain $\Mc$ such as a box or a ball, or, e.g., a Gaussian distribution over $\Rb^n$.

Provided that such analytical evaluation is possible, one can circumvent the sampling step of EDMD and construct directly $\Kc_N$ rather than $\Kc_{N,M}$. Indeed, define
\begin{equation}
A_N = M_{T,\mu} M_{\mu}^{-1},
\end{equation}
where
\[
M_{\mu} = \int_{\Mc} \bs\psi \bs\psi\herm\,d\mu,\quad M_{T,\mu} = \int_{
\Mc}(\bs\psi\circ T)\bs\psi\herm\,d\mu.
\]
Then the operator from $\Fc_N$ to $\Fc_N$ defined by $c\herm \bs \psi\mapsto c\herm A_N\bs\psi$ is exactly $\Kc_N = P_N^\mu \Kc_{|\Fc_N}$.
\begin{theorem}
If the matrix $M_\mu$ is invertible, then for any $\phi = c_\phi\herm \bs\psi \in \Fc_N$ we have
\[
c_\phi\herm A_N\bs\psi = \Kc_N\phi.
\]
\end{theorem}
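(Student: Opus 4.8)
The plan is to mimic the computation in the proofs of Theorem~\ref{thm:L2proj} and Lemma~\ref{lem:projConv}, simply replacing the empirical measure $\hat\mu_M$ by $\mu$ and the finite sum by the integral. First I would recall the closed form of the $L_2(\mu)$-projection onto $\Fc_N$: since $M_\mu$ is invertible (hence Hermitian positive definite), for any $\psi\in\Fc$ one has, by minimizing the strictly convex quadratic $c\mapsto c\herm M_\mu c - 2\Re\{c\herm b_{\mu,\psi}\}$ over $c\in\Cb^N$,
\[
P_N^\mu\psi = b_{\mu,\psi}\herm M_\mu^{-1}\bs\psi,\qquad b_{\mu,\psi} := \int_\Mc \bs\psi\,\overline{\psi}\,d\mu ,
\]
which is exactly the identity derived in the proof of Lemma~\ref{lem:projConv}. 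Invertibility of $M_\mu$ also guarantees uniqueness of the minimizer, so this formula represents $\Kc_N=P_N^\mu\Kc_{|\Fc_N}$ unambiguously on $\Fc_N$.

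Next I would apply this with $\psi=\Kc\phi=\phi\circ T$. Writing $\phi=c_\phi\herm\bs\psi$ and using that precomposition with $T$ acts only on the basis functions while the coefficients are merely conjugated, $\Kc\phi=c_\phi\herm(\bs\psi\circ T)$, whence
\[
b_{\mu,\Kc\phi}=\int_\Mc \bs\psi\,\overline{c_\phi\herm(\bs\psi\circ T)}\,d\mu = \Big(\int_\Mc \bs\psi\,(\bs\psi\circ T)\herm\,d\mu\Big)c_\phi = M_{T,\mu}\herm c_\phi,
\]
since $M_{T,\mu}=\int_\Mc(\bs\psi\circ T)\bs\psi\herm\,d\mu$ and therefore $M_{T,\mu}\herm=\int_\Mc \bs\psi\,(\bs\psi\circ T)\herm\,d\mu$. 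Substituting into the projection formula and using $\Kc_N\phi=P_N^\mu\Kc\phi$ (cf.~(\ref{eq:KNdef})) together with the fact that $M_\mu$ is Hermitian (so $M_\mu^{-1}$ is Hermitian),
\[
\Kc_N\phi = \big(M_\mu^{-1}M_{T,\mu}\herm c_\phi\big)\herm\bs\psi = c_\phi\herm M_{T,\mu}M_\mu^{-1}\bs\psi = c_\phi\herm A_N\bs\psi,
\]
which is the claim.

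I do not expect a genuine obstacle here: the statement is a routine linear-algebra computation, essentially Theorem~\ref{thm:L2proj} with $\hat\mu_M$ replaced by $\mu$. The only point needing care is bookkeeping of Hermitian transposes and complex conjugates — in particular verifying that $(c\herm\bs\psi)\circ T=c\herm(\bs\psi\circ T)$, that the conjugation in $b_{\mu,\Kc\phi}$ produces $M_{T,\mu}\herm c_\phi$ (and not $M_{T,\mu}c_\phi$ or a conjugate thereof), and that $M_\mu\herm=M_\mu$ so that the outer transpose collapses to give exactly $A_N=M_{T,\mu}M_\mu^{-1}$.
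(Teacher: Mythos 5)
Your proposal is correct and follows essentially the same route as the paper: the paper expands the strictly convex quadratic $c\mapsto c\herm M_\mu c - 2\Re\{c\herm M_{T,\mu}\herm c_\phi\}$ directly and reads off the minimizer $c=M_\mu^{-1}M_{T,\mu}\herm c_\phi$, which is exactly the computation you perform via the closed-form projection formula $P_N^\mu\psi=b_{\mu,\psi}\herm M_\mu^{-1}\bs\psi$ with $b_{\mu,\Kc\phi}=M_{T,\mu}\herm c_\phi$. Your bookkeeping of the Hermitian transposes is accurate, so no gap remains.
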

\begin{proof}
Given $\phi = c_\phi\herm \bs\psi$ we get
\[
\Kc_N\phi = P_N^\mu \Kc \phi = \bs\psi\herm\argmin_{c\in\Cb^N}\int_{\Mc}[c\herm\bs\psi - c_\phi\herm (\bs \psi\circ T)]^2\,d\mu = \bs\psi\herm\argmin_{c\in\Cb^N}\big[ c\herm M_{\mu} c - 2\Re\{c\herm M_{T,\mu}\herm c_{\phi}\}\big]
\]
with the unique minimizer $c = M_{\mu}^{-1}M_{T,\mu}\herm c_\phi$ (since $M_\mu$ is invertible, therefore Hermitian positive definite, and hence the minimized function is strictly convex). Therefore as desired \[\Kc_N\phi = c\herm\bs\psi = c_\phi\herm M_{T,\mu} M_{\mu}^{-1}\bs\psi = c_\phi\herm A_N \bs\psi. \] 
\end{proof}

\paragraph{Example} {\it In order to demonstrate the use of Analytic DMD, we compare the spectra of $\Kc_N$ and $\Kc_{N,M}$ for various values of $M$. The system considered is the logistic map
\[
x^+ = 2x^2 - 1,\quad x \in [-1,1].
\]
The measure $\mu$ is taken to be the uniform distribution on $[-1,1]$, which is not invariant and hence the dynamics is not measure preserving. The finite-dimensional subspace $\Fc_N$ is the space of all polynomials of degree no more than eight. For numerical stability we chose a basis of this subspace to be the Laguerre polynomials scaled such that they are orthonormal with respect to the uniform measure on $[-1,1]$. Spectra of $\Kc_N$ and $\Kc_{N,M}$ for $M=10^2$, $M=10^3$ and $M=10^5$ are depicted in Figure~\ref{fig:analDMDspec}. We observe that a relatively large number of samples $M$ is required to obtain an accurate approximation of the spectrum of $\Kc_{N}$. This example demonstrates that a special care must be taken in practice when drawing conclusions about spectral quantities based on a computation with a small number of samples. On the other hand, Figure~\ref{fig:analDMDpred} suggests that, at least on this example, predictions generated by $\Kc_{N,M}$ are less affected by sampling. Indeed, even for $M=100$ the prediction accuracy of $\Kc_{N,M}$ is comparable to that of $\Kc_N$ and for $M=1000$ the two predictions almost coincide.
\begin{figure*}[th]
\begin{picture}(140,135)
\put(-10,0){\includegraphics[width=60mm]{./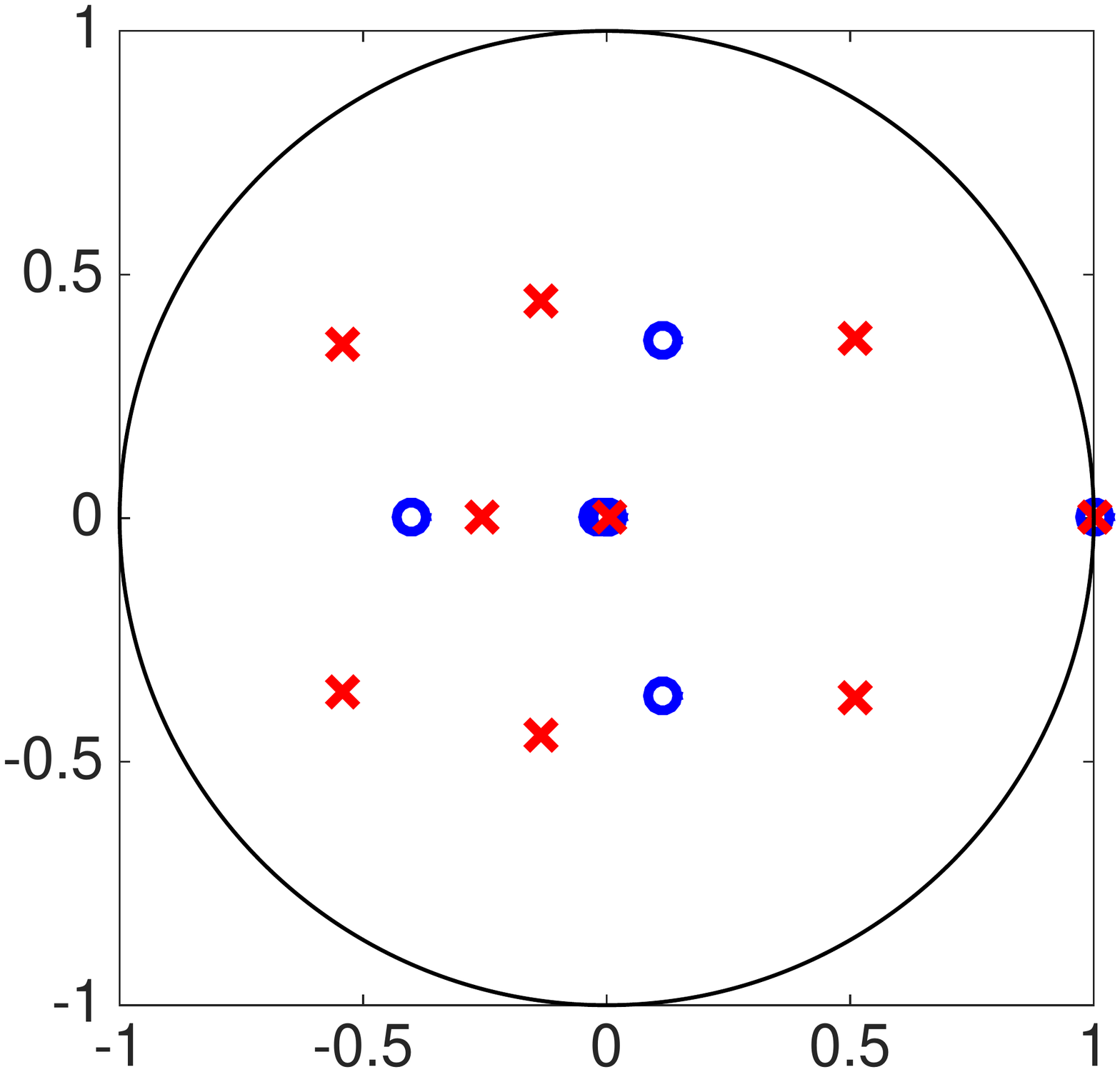}}
\put(155,0){\includegraphics[width=60mm]{./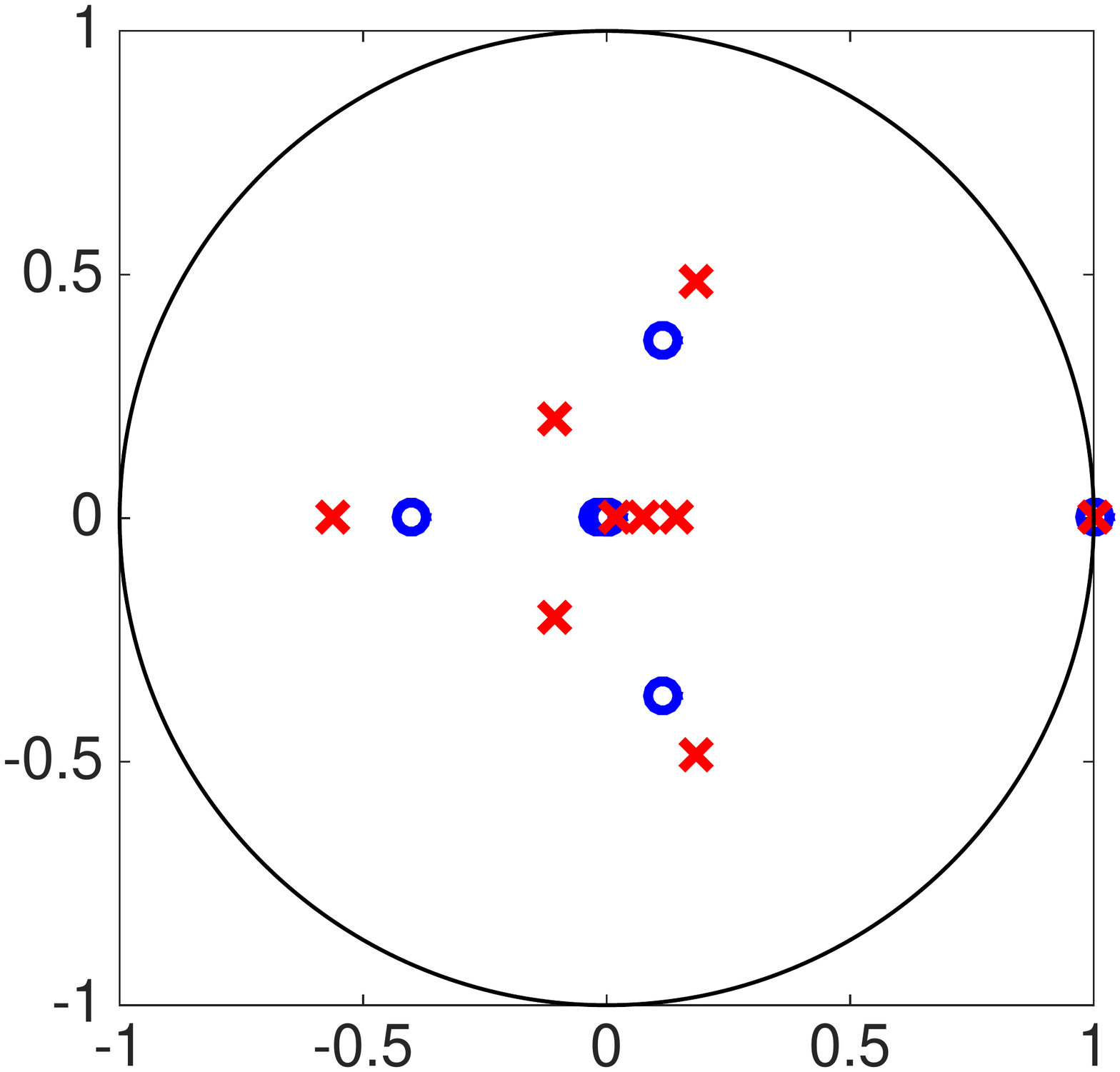}}
\put(320,0){\includegraphics[width=60mm]{./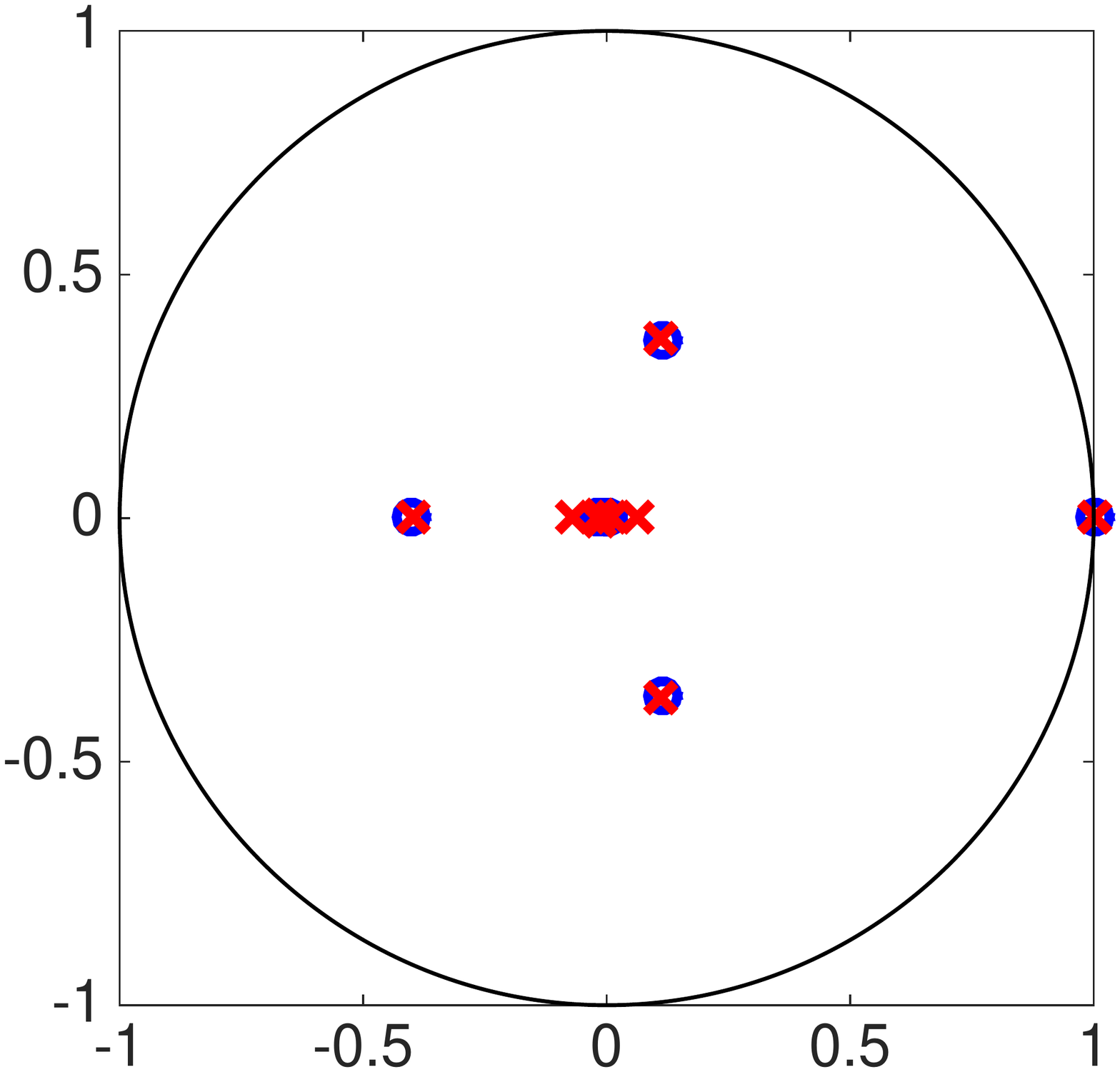}}

\put(230,0){\footnotesize $\Re\{\lambda\}$}
\put(-25,60){\footnotesize $\Im\{\lambda\}$}
\put(60,105){\footnotesize $ M = 10^2$}
\put(225,105){\footnotesize $ M = 10^3$}
\put(390,105){\footnotesize $ M = 10^5$}

\end{picture}
\caption{Comparison of the spectra of $\Kc_N$ (blue circles) computed using Analytic DMD and the spectra of $\Kc_{N,M}$ for different values of $M$ (red crosses).}
\label{fig:analDMDspec}
\end{figure*}

\begin{figure*}[th]
\begin{picture}(140,180)
\put(130,0){\includegraphics[width=80mm]{./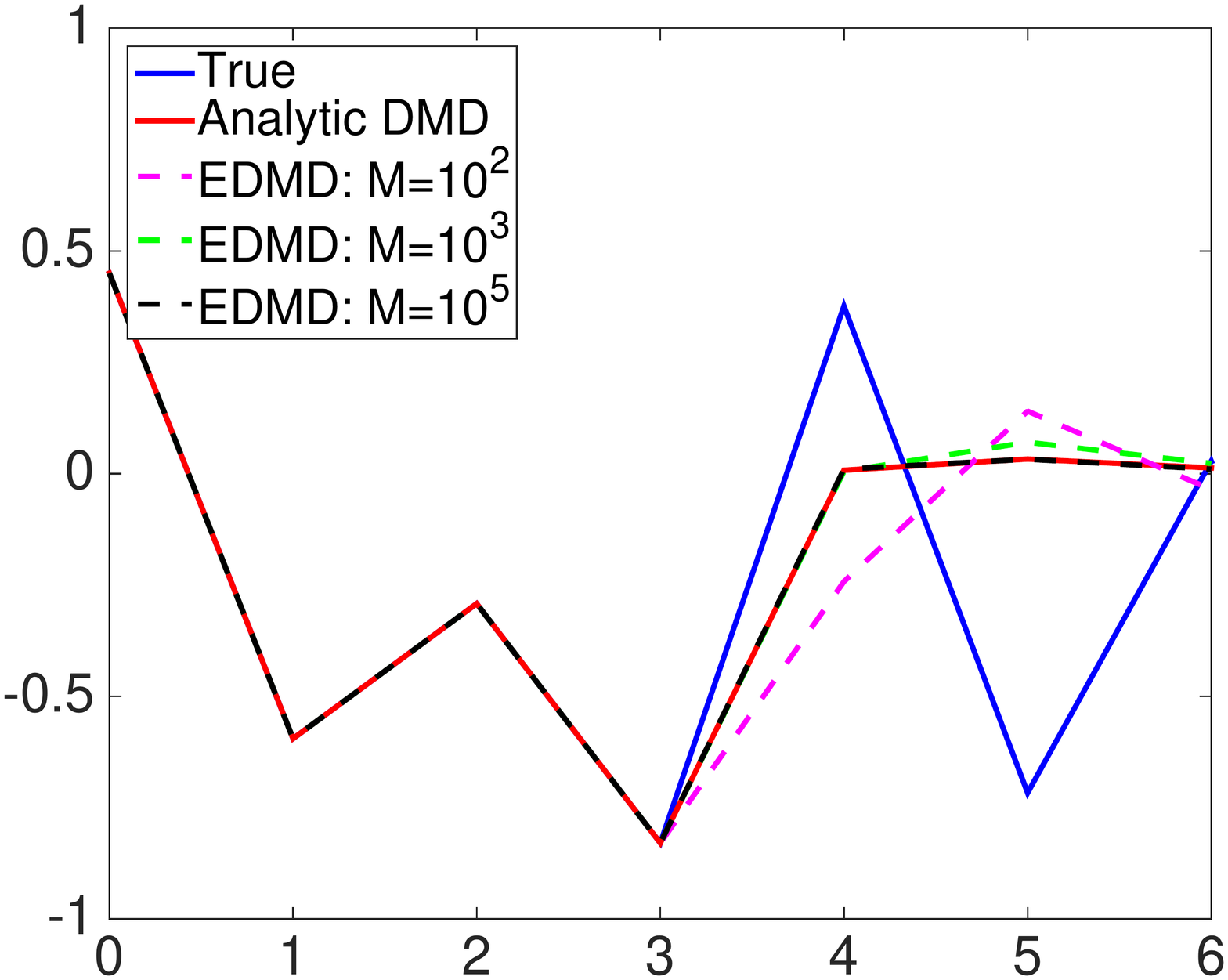}}
\put(235,0){time}
\put(120,90){ $x$}
\end{picture}
\caption{Comparison of predictions generated using $\Kc_N$ and using $\Kc_{N,M}$ for different values of $M$.}
\label{fig:analDMDpred}
\end{figure*}

}

\section{Convergence of $\Kc_{N,N}$}
\label{KNNconv}
In this section we investigate what happens when we simultaneously increase the number of basis functions $N$ and the number of samples $M$. We treat the special case of $M = N$ for which interesting conclusions can be drawn. Set therefore $M = N$ and denote $\lambda_N = \lambda_{N,N}$ any eigenvalue of $\Kc_{N,N}$ and $\phi_N = \phi_{N,N} \in \Fc_N$, $\|\phi_N\|_{C(\Mc)} =1$, the associated eigenfunction, where $\|\phi \|_{C(\Mc)} = \sup_{x\in\Mc} |\phi(x)|$; such normalization is possible if the basis functions $\psi_i$ are continuous and $\Mc$ compact, which we assume in this section. First notice that, assuming $M_{\hat\mu_N}$ invertible, for $N=M$ the system of equations
\[
\bs\psi(\bs Y) = A\bs\psi(\bs X)
\]
with the unknown $A\in \Rb^{N\times N}$ has a solution and hence the minimum in the least squares problem~(\ref{eq:ls}) is zero. In other words, for any $f\in\Fc_N$, the EDMD operator $\Kc_{N,N}$ applied to $f$ matches the value of the Koopman operator applied to $f$ on the samples points $x_1,\ldots,x_N$:
\[
(\Kc f)(x_i) = (\Kc_{N,N}f)(x_i)
\]
for all $f\in \Fc_N$. This relation is in particular satisfied for the eigenfunctions $\phi_N$ of $\Kc_{N,N}$, obtaining
\[
(\phi_N\circ T)(x_i) = \lambda_N \phi_N(x_i).
\]
Multiplying by an arbitrary $h\in\Fc$ and integrating with respect to the empirical measure supported on the sample points~(\ref{eq:emp_meas_def}), we get
\begin{equation}\label{eq:eig_sample}
\int_{\Mc} h\cdot (\phi_{N}\circ T)\, d\hat\mu_{N}  = \lambda_{N} \int_{\Mc} h \phi_{N} \, d\hat\mu_{N}.
\end{equation}
Define the linear functional $L_{N} : C(\Mc)\to \mathbb{C}$ by
\[
L_N(h) = \int_{\Mc} h\, \phi_{N}\, d\hat{\mu}_{N},
\]
and  
\[
(\Kc L_N)(h) = \int_{\Mc} h\cdot (\phi_{N}\circ T)\, d\hat\mu_{N}. 
\]
With this notation, the relationship~(\ref{eq:eig_sample}) becomes
\[
\Kc L_N = \lambda_{N} L_N
\]
Since $\| \phi_{N}\|_{C(\Mc)} = 1$, we have $\| L_N \| = \sup_{h\in C(\Mc)} \frac{|L_N(h)|}{\|h \|_{C(\Mc)}} \le 1$ and $\| \Kc L_N \| \le 1$. Therefore, assuming separability\footnote{A sufficient condition for $C(\Mc)$ to be separable is $\Mc$ compact and metrizable.} of $C(\Mc)$, by the Banach-Alaoglu theorem (e.g., \cite[Theorems 3.15, 3.17]{rudinFA}) there exists a subsequence, along which these functionals converge in the weak$^\star$ topology\footnote{A sequence of functionals $L_i\in C(\Mc)^\star$ converges in the weak$^\star$ topology if $\lim_{i\to\infty}L_i(f) = L(f)$ for all $f\in C(\Mc)$.} to some functionals $L \in C(\Mc)^\star$ and $\Kc L\in C(\Mc)^\star$ satisfying
\[
\Kc L = \lambda L,
\] 
where $\lambda$ is an accumulation point of $\lambda_N$. Furthermore by the Riesz representation theorem the bounded linear functionals $L$ and $\Kc L$ can be represented by complex-valued measures $\nu$ and $\Kc\nu$ on $\Mc$ satisfying
\[
\Kc\nu = \lambda\nu.
\]

We remark that $\Kc L$ and $\Kc\nu$ are here merely symbols for the weak$^\star$ limit of $\Kc L_N$ and its representation as a measure; in particular the functional $\Kc L$ is not necessarily of the form $(\Kc L)(h)=\int_{\Mc} h\cdot (\rho\circ T)\,d\mu$ for some function $\rho$.

In order to get more understanding of $\Kc\nu$ (and hence $\Kc L$), we need to impose additional assumptions on the structure of the problem. In particular we assume that the mapping $T:\Mc\to\Mc$ is a homeomorphism and that the points $x_1,\ldots,x_N$ lie on a single trajectory, i.e., $x_{i+1} = T(x_i)$. With this assumption, Equation~(\ref{eq:eig_sample}) reads
\begin{equation}\label{eq:eig_sample_1}
\frac{1}{N} \sum_{i=1}^N h(x_i)\phi_N(x_{i+1})  = \lambda_{N} \frac{1}{N} \sum_{i=1}^N h(x_i)\phi_N(x_i),
\end{equation}
where we set $x_{N+1}:= T(x_N)$. The left-hand side of~(\ref{eq:eig_sample_1}) is
\begin{align}
\frac{1}{N} \sum_{i=1}^N h(x_i)\phi_N(x_{i+1}) &= \frac{1}{N} \sum_{i=1}^N h(T^{-1} x_i)\phi_N(x_i) + \frac{1}{N}(h(x_N)\phi_N(x_{N+1}) - h(T^{-1}x_1)\phi_N(x_1))\nonumber \\
&= \int_{\Mc} h\circ T^{-1} \, d\nu_N + \frac{1}{N}(h(x_N)\phi_N(x_{N+1}) - h(T^{-1}x_1)\phi_N(x_1)), \label{eq:1Ncomp}
\end{align}
where $\nu_N$ is the measure $\phi_N d\hat\mu_N$. Setting $\xi_N:=h(x_N)\phi_N(x_{N+1}) - h(T^{-1}x_1)\phi_N(x_1)$, the relation~(\ref{eq:eig_sample_1}) becomes
\[
\int_{\Mc} h\circ T^{-1} \, d\nu_N + \frac{1}{N}\xi_N = \lambda_N\int_{\Mc} h\,d\nu_N.
\]
Since $h$ is bounded on $\Mc$ ($h$ is continuous and $\Mc$ compact) and $\| \phi_N\|_{C(\Mc)} = 1$, the term $\xi_N$ is bounded; in addition $h\circ T^{-1}$ is continuous since $T$ is a homeomorphism by assumption. Therefore, taking a limit on both sides, along a subsequence such that $\nu_{N_i}\to \nu$ weakly\footnote{A sequence of Borel measures $\mu_i$ converges weakly to a measure $\mu$ if $\lim_{i\to\infty}\int f\,d\mu_i = \int f\,d\mu $ for all continuous bounded functions $f$. This convergence is also referred to as narrow convergence and it coincides with convergence in the weak$^\star$ topology if the underlying space is compact (which is the case in our setting).}, $\hat\mu_{N_i}\to \mu$ weakly and $\lambda_{N_i}\to\lambda$, we obtain
\begin{equation}\label{eq:eigFunctional}
\int_{\Mc} h\circ T^{-1} \, d\nu = \lambda\int_{\Mc} h\,d\nu
\end{equation}
for all $h\in C(\Mc)$.

\subsection{Weak eigenfunctions / eigendistributions}
To understand relation~(\ref{eq:eigFunctional}), note that a completely analogous computation to~(\ref{eq:1Ncomp}) shows that the measure $\mu$ is invariant\footnote{A measure $
\mu$ on $\Mc$ is invariant if $\mu(T^{-1}(A)) = \mu(A)$ for all Borel sets $A\subset \Mc$ or equivalently if $\int_{\Mc} f\circ T \, d\mu = \int_{\Mc} f \, d\mu $ for all continuous bounded functions $f$.} and therefore the $L_2(\mu)$-adjoint $\Kc^\star$ of the Koopman operator (viewed as an operator from $L_2(\mu) $ to $L_2(\mu)$) is given by
\[
\Kc^\star f = f\circ T^{-1}.
\]
To see this, write
\[
 \langle \Kc f,g \rangle  = \int_{\Mc} (f\circ T) \overline{g} \,d\mu = \int_{\Mc} (f\circ T)\overline{(g\circ T^{-1}\circ T)} \,d\mu = \int_{\Mc} f \cdot \overline{(g\circ T^{-1} )}\,d\mu = \langle f, \Kc^\star g\rangle,
\]
which means the operator $g\mapsto g\circ T^{-1}$ is indeed the $L_2(\mu)$-adjoint of $\Kc$. The relation~(\ref{eq:eigFunctional}) then becomes
\begin{equation}\label{eq:eigFunctional_measurePres_1}
\int_{\Mc} \Kc^\star h \, d\nu = \lambda\int_{\Mc} h\,d\nu
\end{equation}
or
\begin{equation}\label{eq:eigFunctional_measurePres_2}
L(\Kc^\star h) = \lambda L(h).
\end{equation}

Functionals of the form~(\ref{eq:eigFunctional_measurePres_2}) were called ``generalized eigenfunctions'' by Gelfand and Shilov~ \cite{GelfandandShilov:1964}; here we prefer to call them ``weak eigenfunctions'' or ``eigendistributions'' in order to avoid confusion with generalized eigenfunctions viewed as an extension of the notion of generalized eigenvectors from linear algebra. The measure $\nu$ in~(\ref{eq:eigFunctional_measurePres_1}) is then called ``eigenmeasure''. Here, again, we emphasize the requirement that the limiting functional $L$ (or the measure~$\nu$) be nonzero in order for these objects to be called eigenfunctionals / eigenmeasures.

\subsection{Eigenmeasures of Perron-Frobenius}
We also observe an interesting connection to eigenmeasures of the Perron-Frobenius operator. To see this, set $h:= g\circ T$ in~(\ref{eq:eigFunctional}) to obtain
$
\int_{\Mc}g\,d\nu = \lambda\int_{\Mc} g\circ T\,d\nu
$
or, provided that $\lambda\ne 0$,
\begin{equation}
\int_{\Mc} g\circ T\,d\nu  = \frac{1}{\lambda} \int_{\Mc}g\,d\nu.
\end{equation}
In other words, if non-zero, the measure $\nu$ is the eigenmeasure of the Perron-Frobenius operator with eiegnvalue~$1/\lambda$. Here, the Perron-Frobenius operator $\Pc:M(\Mc)\to M(\Mc)$, where $M(\Mc)$ is the space of all complex-valued measures on $\Mc$, is defined for every $\eta \in M(\Mc)$ and every Borel set $A$ by
\[
(\Pc\eta)(A) = \eta(T^{-1}(A)).
\]

The results of Section~\ref{KNNconv} are summarized in the following theorem:
\begin{theorem}
Suppose that $\Mc$ is a compact metric space, $T$ is a homeomorphism, $\Kc:L_2(\mu)\to L_2(\mu)$ is bounded, the observables $\psi_1,\ldots,\psi_N$ are continuous and the sample points $x_1,\ldots, x_N$ satisfy $x_{i+1} = T(x_i)$ for all $i\in\{1,\ldots,N-1\}$. Let $\lambda_N$ be a bounded sequence of eigenvalues of $\Kc_{N,N}$,  let $\phi_N$, $\|\phi_N\|_{C(\Mc)}=1$, be the associated normalized eigenfunctions and denote $\nu_N = \phi_N d\hat \mu_N$. Then there exists a subsequence $(N_i)_{i=1}^\infty$ such that $\nu_{N_i}$ and $\hat\mu_{N_i}$ converge weakly to  complex-valued measures $\nu\in M(\Mc)$, $\mu\in M(\Mc)$ and $\lim_{i\to\infty} \lambda_{N_i } = \lambda \in \mathbb{C}$ such that
\[
\int_{\Mc} h\circ T^{-1} \, d\nu = \lambda\int_{\Mc} h\,d\nu\quad \forall\,h\in C(\Mc).
\]
In addition, the measure $\mu$ is invariant under the action of $T$ and
\[
\int_{\Mc} \Kc^\star h \, d\nu = \lambda\int_{\Mc} h\,d\nu \quad \forall\,h\in C(\Mc),
\]
where $\Kc^\star$ is the $L_2(\mu)$ adjoint of $\Kc$, i.e., if nonzero, $\nu$ is a weak eigenfunction (or eigendistribution) of the Koopman operator. Furthermore, if $\lambda \ne 0$, then
\[
\int_{\Mc} h\circ T\,d\nu  = \frac{1}{\lambda} \int_{\Mc}h\,d\nu \quad \forall\,h\in C(\Mc),
\]
i.e., if nonzero, $\nu$ is an eigenmeaure of the Perron-Frobenius operator with eigenvalue $1/\lambda$.
\end{theorem}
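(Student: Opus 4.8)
The statement collects the facts derived in the discussion preceding it, so the plan is essentially to assemble that chain into one coherent argument. The three ingredients are: (i) the exactness of $\Kc_{N,N}$ on the $N$ sample points; (ii) a weak-$^\star$ compactness argument producing the limiting measures $\nu$, $\mu$ and the limiting eigenvalue $\lambda$; and (iii) a telescoping/re-indexing identity, valid because the samples lie on a single trajectory and $T$ is a homeomorphism, which rewrites $\phi_N\circ T$ sampled along the orbit as an integral of $h\circ T^{-1}$ against $\nu_N$.

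First I would record the exactness step. With $M=N$ and $M_{\hat\mu_N}$ invertible (a standing hypothesis implicit in the discussion), the least-squares problem~(\ref{eq:ls}) attains value zero, hence $(\Kc_{N,N}f)(x_i)=(\Kc f)(x_i)$ for every $f\in\Fc_N$ and every sample $x_i$; taking $f=\phi_N$ gives $(\phi_N\circ T)(x_i)=\lambda_N\phi_N(x_i)$, and integrating the product with an arbitrary $h\in C(\Mc)$ against $\hat\mu_N$ produces~(\ref{eq:eig_sample}), i.e. $\int_\Mc h\cdot(\phi_N\circ T)\,d\hat\mu_N=\lambda_N\int_\Mc h\,d\nu_N$ with $\nu_N:=\phi_N\,d\hat\mu_N$. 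Since $\|\phi_N\|_{C(\Mc)}=1$ and $\hat\mu_N$ is a probability measure, the families $(\nu_N)$ and $(\hat\mu_N)$ lie in bounded balls of $M(\Mc)=C(\Mc)^\star$; separability of $C(\Mc)$ (valid as $\Mc$ is compact metric) together with the Banach--Alaoglu theorem then yields a single subsequence $(N_i)$ along which $\nu_{N_i}\to\nu$ and $\hat\mu_{N_i}\to\mu$ weakly (equivalently weak-$^\star$, as $\Mc$ is compact) and, by boundedness of $(\lambda_N)$, $\lambda_{N_i}\to\lambda$.

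Next I would exploit the homeomorphism and single-trajectory hypotheses. Writing the left-hand side of~(\ref{eq:eig_sample}) as $\tfrac{1}{N}\sum_i h(x_i)\phi_N(x_{i+1})$ and re-indexing gives, exactly as in~(\ref{eq:1Ncomp}), $\int_\Mc (h\circ T^{-1})\,d\nu_N+\tfrac{1}{N}\xi_N$, where $\xi_N$ is bounded since $h$ is bounded on the compact $\Mc$ and $\|\phi_N\|_{C(\Mc)}=1$, and $h\circ T^{-1}$ is continuous and bounded since $T$ is a homeomorphism. Passing to the limit along $(N_i)$ gives $\int_\Mc (h\circ T^{-1})\,d\nu=\lambda\int_\Mc h\,d\nu$ for all $h\in C(\Mc)$, equivalently $\Kc\nu=\lambda\nu$ after Riesz representation. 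An identical telescoping applied to $\int_\Mc h\circ T\,d\hat\mu_N=\tfrac{1}{N}\sum_i h(x_{i+1})=\int_\Mc h\,d\hat\mu_N+\tfrac{1}{N}(h(x_{N+1})-h(x_1))$ shows, in the limit, that $\mu$ is $T$-invariant; the change of variables $\langle\Kc f,g\rangle=\int_\Mc (f\circ T)\overline{(g\circ T^{-1})\circ T}\,d\mu=\int_\Mc f\,\overline{g\circ T^{-1}}\,d\mu=\langle f,g\circ T^{-1}\rangle$ (middle step by $\mu$-invariance) then identifies $\Kc^\star g=g\circ T^{-1}$ on $L_2(\mu)$, so the first identity becomes $\int_\Mc \Kc^\star h\,d\nu=\lambda\int_\Mc h\,d\nu$. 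Finally, substituting $h=g\circ T$ (legitimate since $T$ is a homeomorphism, so $g\circ T\in C(\Mc)$ and $(g\circ T)\circ T^{-1}=g$) into the first identity gives $\int_\Mc g\,d\nu=\lambda\int_\Mc g\circ T\,d\nu$, i.e. $\int_\Mc h\circ T\,d\nu=\tfrac{1}{\lambda}\int_\Mc h\,d\nu$ whenever $\lambda\ne0$, the Perron--Frobenius eigenmeasure relation.

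Each individual step is elementary; the only point requiring genuine care — and the closest thing to an obstacle — is the joint extraction of a single subsequence along which all three objects $(\nu_{N_i},\hat\mu_{N_i},\lambda_{N_i})$ converge, together with the bookkeeping ensuring that the boundary correction $\tfrac{1}{N}\xi_N$ (and its analogue in the $\mu$-invariance computation) vanishes in the limit. This is precisely where compactness of $\Mc$ and the use of the $C(\Mc)$-normalization $\|\phi_N\|_{C(\Mc)}=1$ (in place of the $L_2(\mu)$-normalization used in Section~\ref{KNconv}) are indispensable, since they are what make $\xi_N$ bounded and the sequences of measures weak-$^\star$ precompact.
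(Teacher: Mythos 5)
Your proposal is correct and follows essentially the same route as the paper: exactness of $\Kc_{N,N}$ on the sample points, Banach--Alaoglu extraction of a common subsequence for $\nu_{N}$, $\hat\mu_{N}$ and $\lambda_N$, the telescoping identity~(\ref{eq:1Ncomp}) with the vanishing boundary term $\tfrac{1}{N}\xi_N$, identification of $\Kc^\star h = h\circ T^{-1}$ via $T$-invariance of $\mu$, and the substitution $h = g\circ T$ for the Perron--Frobenius relation. The only (welcome) addition is that you spell out the telescoping computation establishing invariance of $\mu$, which the paper merely describes as ``completely analogous'' to~(\ref{eq:1Ncomp}).
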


\section{Conclusions}
This paper analyzes the convergence of the EDMD operator $\Kc_{N,M}$, where $M$ is the number of samples and $N$ the number of observables used in EDMD. It was proven in~\cite{klus2015numerical} that as $M\to\infty$,  the operator $\Kc_{N,M}$ converges to $\Kc_N$, the orthogonal projection of the action of the Koopman operator on the span of the observables used in EDMD. We analyzed the convergence of $\Kc_N$ as $N\to\infty$, obtaining convergence in strong operator topology to the Koopman operator and weak convergence of the associated eigenfunctions along a subsequence together with the associated eigenvalues. In particular, any accumulation point of the spectra of $\Kc_N$ corresponding to a non-zero weak accumulation point of the eigenfunctions lies in the point spectrum of the Koopman operator $\Kc$. In addition we proved convergence of finite-horizon predictions obtained using $\Kc_N$ in the $L_2$ norm, a result important for practical applications such as forecasting, estimation and control. Finally we analyzed convergence of $\Kc_{N,N}$ (i.e., the situation where the number of samples and the number of basis functions is equal) under the assumptions that the sample points lie on the same trajectory. In this case one obtains convergence, along a subsequence, to a weak eigenfunction (or eigendistribution) of the Koopman operator, provided the weak limit is nonzero. This eigendistribution turns out to be also an eigenmeasure of the Perron-Frobenius operator. As a by-product of these results, we proposed an algorithm that, under some assumptions, allows one to construct  $\Kc_N$ directly, without the need for sampling, thereby eliminating the sampling error.

\new{Future work should focus on non-asymptotic analysis, e.g., on selecting the subspace $\Fc_N$ such that $\|\Kc_{N} - \Kc_{|\Fc_N} \|$ is minimized and at the same time such that $\Fc_N$ is rich enough in the sense of containing observables of practical interest (e.g., the state observable). This line of research was already investigated in the context of stochastic systems in~\cite{wu2017variational}, providing an interesting and actionable method for selecting $\Fc_N$. 
}

\label{conc}

\section{Acknowledgments}
The first author would like to thank Mihai Putinar for discussions on a related topic that sparked the work on this paper. We also thank Clancy Rowley for helpful comments on an earlier version of this manuscript as well as to P\' eter Koltai for point out the reference~\cite{klus2015numerical}. This research was supported in part by the ARO-MURI grant W911NF-14-1-0359 and the DARPA grant HR0011-16-C-0116. The research of M. Korda was supported by the Swiss National Science Foundation under grant P2ELP2\_165166.

\bibliographystyle{abbrv}
\bibliography{./References}

\end{document}